\documentclass[12pt]{amsart}
\usepackage{color}
\usepackage{amssymb}
\usepackage{comment}
\usepackage{pdfpages}
\usepackage{graphicx}
\usepackage{dcolumn}


\RequirePackage[numbers]{natbib}
\RequirePackage[colorlinks=true, pdfstartview=FitV, linkcolor=blue,
  citecolor=blue, urlcolor=blue]{hyperref}
\RequirePackage{hypernat}
\usepackage{paralist}

\usepackage{graphicx}
\graphicspath{{./figures/}}
\usepackage{amsfonts}
\usepackage{amsmath}
\usepackage{amsthm}
\usepackage{amssymb}
\usepackage{amsbsy}
\usepackage{epsfig}
\usepackage{fullpage}
\usepackage{natbib, mathrsfs} 
\usepackage{verbatim}
\usepackage[latin1]{inputenc}
\usepackage{mhequ}
\usepackage{algorithm}
\usepackage{algorithmic}

\numberwithin{equation}{section}


\def \be{\begin{equs}}
\def \ee{\end{equs}}
\def \P{\mathbb{P}}
\def \E{\mathbb{E}}

\def\propN{\lambda^{(N)}}

\newtheorem{theorem}{Theorem}[section]
\newtheorem{lemma}[theorem]{Lemma}
\newtheorem{remark}[theorem]{Remark}

\newtheorem{prop}[theorem]{Proposition}
\newtheorem{assumptions}[theorem]{Assumptions}
\newtheorem{assumption}[theorem]{Assumption}

\newtheorem{defn}[theorem]{Definition}

\theoremstyle{plain}
\newtheorem{thm}{Theorem}
\newtheorem*{thm-non}{Theorem}

\newtheorem{example}{Example}


\begin{document}

\title[Continuous Nonreversible Chains]{Bounds on Lifting Continuous-State Markov Chains to Speed Up Mixing}

\author{Kavita Ramanan$^{\ddag}$} 
\thanks{$^{\ddag}$kavita$\_$ramanan@brown.edu, 
   Division of Applied Mathematics
    Brown University, 182 George Street, Providence RI 02906, USA}
\author{Aaron Smith$^{\sharp}$}
\thanks{$^{\sharp}$smith.aaron.matthew@gmail.com, 
   Department of Mathematics and Statistics
University of Ottawa, 585 King Edward Drive, Ottawa
ON K1N 7N5, Canada}

\date{\today}

\maketitle

\begin{abstract}
It is often possible to speed up the mixing of a Markov chain $\{ X_{t} \}_{t \in \mathbb{N}}$ on a state space $\Omega$ by \textit{lifting},  that is, running a more efficient Markov chain $\{ \widehat{X}_{t} \}_{t \in \mathbb{N}}$ on a larger state space $\hat{\Omega} \supset \Omega$ that projects to $\{ X_{t} \}_{t \in \mathbb{N}}$ in a certain sense. In \cite{CLP99}, Chen, Lov{\'a}sz and Pak prove that  for Markov chains on finite state spaces, the mixing time of any lift of a Markov chain is at least the square root of the mixing time of the original chain, up to a factor that depends on the stationary measure of $\{X_t\}_{t \in \mathbb{N}}$. Unfortunately, this extra factor makes the bound in \cite{CLP99} very loose for Markov chains on large state spaces and useless for Markov chains on continuous state spaces. In this paper, we develop an extension of the evolving set method that allows us to refine this extra factor and find bounds for Markov chains on continuous state spaces that are analogous to the bounds in \cite{CLP99}. These bounds also allow us to improve on the bounds in \cite{CLP99} for some chains on finite state spaces.
\end{abstract}

\section{Introduction} 

Markov chain Monte Carlo (MCMC) is a widely-used method for approximately sampling from complicated distributions on some state space $\Omega$ \cite{Diac08}. 
The standard approach involves running a suitable ergodic Markov chain on $\Omega$ whose stationary distribution coincides with the sampling distribution and then sampling from the Markov chain after sufficiently many iterations. 
 The efficiency of this method depends on how long it takes for the underlying Markov chain to mix, that is,  for its law to come sufficiently close to  its stationary distribution (say, in the total variation metric).  
 For this reason, there is a great deal of interest in finding ways to construct Markov chains that decrease this 
mixing time.  Many of these methods \cite{Neal04, DiMi12, GSS10} are closely related to creating a \textit{lift} of a Markov chain defined on  the original state space $\Omega$.  Roughly speaking, this entails the construction of a more efficient Markov chain on a larger state space $\hat{\Omega} \supset \Omega$ that projects to a chain on the original state space in a suitable sense 
 (see Definition \ref{def-lift} for the precise definition of \textit{lift} used in this paper). This leads to the following natural question: how much efficiency can be gained by lifting? \par 

One version of this question was originally answered in Theorem 3.1 of \cite{CLP99}. For suitable definitions of \textit{mixing time} and \textit{lift}, the authors showed that if $\{ X_{t} \}_{t \in \mathbb{N}}$ is a Markov chain on a finite state space with stationary distribution $\pi$ and mixing time $\tau$, while $\{\widehat{X}_{t}\}_{t \in \mathbb{N}}$ is a lift of $\{ X_{t} \}_{t \in \mathbb{N}}$ with mixing time $\widehat{\tau}$, then  
\be \label{IneqClpThm31}
\widehat{\tau} \geq \frac{1}{10 \sqrt{30}} \sqrt{\frac{-1}{\log(\min_{x} \pi(x))}} \, \sqrt{\tau}.
\ee
In many examples, lifts can be constructed to achieve near-equality in this bound.  
For example, in \cite{diaconis2000analysis}, the authors construct a \textit{lift} of the usual simple random walk on the cycle $\mathbb{Z}_{n}$, for which $\pi$ is the uniform distribution on a set of cardinality $n \in \mathbb{N}$, and so the lower bound in  \eqref{IneqClpThm31} takes the form $c \sqrt{\tau}/\sqrt{\log (n)}$, with $c = 1/(10\sqrt{30})$. 
Theorem 1 of \cite{diaconis2000analysis} shows that the lift has mixing time $\widehat{\tau} = \Theta(n)$, while the mixing time of the original chain is $\tau = \Theta(n^{2})$.
This example still leaves open the possibility that the term $\log( \min_{x} \pi(x))$ in equation \eqref{IneqClpThm31} is not necessary - that is, it might be possible to replace this term by some universal constant $0 < c < \infty$.  However, as shown in Section 3 of \cite{CLP99}, this is not possible.  Theorem 3.2 of \cite{CLP99} implies the existence of a sequence of Markov chains and lifts, with mixing times $\{\tau_{n}\}_{n \in \mathbb{N}}$ and  $\{ \widehat{\tau}_{n} \}_{n \in \mathbb{N}}$ respectively, that satisfy
\be 
\lim_{n \rightarrow \infty} \frac{\widehat{\tau}_{n}}{\sqrt{\tau_{n}}} = 0.
\ee 

Unfortunately, the dependence on $\min_{x} \pi(x)$ in inequality \eqref{IneqClpThm31} means the inequality degrades with the size of the state space. For this reason, there seems to be no plausible way to use inequality \eqref{IneqClpThm31} to obtain any nontrivial bound on the mixing time of a lift of a Markov chain on a continuous or countably infinite state space. Our main result, Theorem \ref{LiftImpBd2}, provides a partial solution to this problem. It does so by establishing an inequality that is quite similar to inequality \eqref{IneqClpThm31}, with the key difference that the term $\min_{x} \pi(x)$ is replaced by a different quantity $\pi_{\ast}$ that can remain bounded away from zero even as the size of the underlying state space goes to infinity, or when the state space is continuous. \par

We briefly outline the proof of inequality \eqref{IneqClpThm31} in \cite{CLP99} and explain how we replace the term $\min_{x} \pi(x)$. The argument in \cite{CLP99} proceeds by relating the mixing time $\tau$ of a chain $\{X_{t}\}_{t \in \mathbb{N}}$ to another measure of mixing, the \textit{conductance} $\Phi$ of $\{X_{t}\}_{t \in \mathbb{N}}$, and then relating this conductance to the mixing time $\widehat{\tau}$ of a lift $\{\widehat{X}_{t}\}_{t \in \mathbb{N}}$ of  $\{X_{t}\}_{t \in \mathbb{N}}$.   The standard approach to  bounding a mixing time in terms of a conductance is well known to introduce a term of the form $\min_{x} \pi(x)$ (see, {\it e.g.}, Theorem 17.10 of \cite{LPW09}).  To avoid this problem, we establish a new bound on  the mixing time of a finite-state chain in terms of its conductance, which is  closely based on the \textit{evolving set method} of Morris and Peres \cite{MoPe06}. This new bound, obtained by analyzing the evolving set bound for Markov chains given a `warm start,' allows us to replace the term $\min_{x} \pi(x)$ in Theorem 17.10 of \cite{LPW09} with the larger term $\pi_{\ast}$, which has the critical property that it can be strictly positive and uniformly bounded away from zero for arbitrarily large state spaces.  This bound immediately results in a new lower bound for the mixing time of lifts of Markov chains, in the spirit of inequality \eqref{IneqClpThm31},  but which can be significantly better  (see Appendix A for an application of our bound to discrete state spaces). 
We then prove our analogue to  inequality \eqref{IneqClpThm31} for continuous state spaces by approximating our Markov chain 
with a sequence of Markov chains on discrete state spaces.  
 
The layout of this paper is as follows. In Section \ref{SecDefMainResults} we provide preliminary definitions, including the definition of a lift of a  Markov chain on a continuous state space, and state simplified versions of our main results.   In Section \ref{SecEvolvingSet}, we present our extension of the standard evolving set method and use  it to obtained improved bounds on  mixing times of Markov chains on discrete state spaces.  In Section \ref{SecLiftingBound}, we state and prove 
a bound on mixing times under fairly general conditions.  This is used in Section \ref{sec-last} to prove 
our main result: an  analogue to inequality \eqref{IneqClpThm31} for Markov chains on continuous states spaces. In Appendix \ref{AppSimpleImp}, we translate our new bounds back to the discrete setting of \cite{CLP99} and give an example showing that our bound can also provide  an improvement in that setting. Appendix \ref{AppSimpleCoup} proves a short technical result related to the example given in Section \ref{SecDefMainResults}.

\section{Notation, Definitions and Main Results} \label{SecDefMainResults}
 
\subsection{Notation and Definitions}
We let $\mathbb{R}^{+}$ be the nonnegative reals.

We recall the `big-O' notation. For functions $f,g$ on $[0,\infty)$ we say that $f = O(g)$ if there exist constants $0 <C, X < \infty$ such that $x > X$ implies $f(x) \leq C g(x)$.  For functions $f,g$ on $[0,\infty)$, we also say that $f = \mathbf{\Omega}(g)$ if there exist $0 <C, X < \infty$ such that $x > X$ implies $f(x) \geq C g(x)$. In the same situation, we say that $f = o(g)$ if $\lim_{x \rightarrow \infty} \frac{f(x)}{g(x)} = 0$. Finally, in the same situation, we write $f = \Theta(g)$ to mean that both $f = O(g)$ and $g = O(f)$. 
When $f$ and $g$ can be viewed as functions of two or more variables, we will use a subscript to indicate which variable is being compared if the bound does not hold uniformly in all variables. For example, if $f(x,y) = x^{2} y^{20}$ and $g(x,y) = x^{4} y^{10}$, we could write $f = O_{x}(g)$, since 
for every $y$ there exists some constant $0 < C = C(y) < \infty$ so that $f(x,y) \leq C(y) \, g(x,y)$ for all $x$. However, we could not write $f = O_{y}(g)$ or $f = O(g)$. 

For any set $S$, the function $\textbf{1}_{S}(x)$ if $x \in S$ and 0 otherwise. When $s$ is a single point, we slightly abuse notation and define $\textbf{1}_{s} = \textbf{1}_{\{s\}}$.

Throughout our paper, we will generally consider Markov chains on state spaces that are either a compact subset of Euclidean space $\mathbb{R}^{d}$ or the torus $\mathbb{T}^{d}$. We will always consider $\mathbb{R}^{d}$ with metric given by
\be \label{EqDefTorusMetric}
| x - y| := \max_{1 \leq i \leq d} | x_{i} - y_{i} |
\ee 
and equipped with the usual Borel $\sigma$-algebra and Lebesgue measure. When we are referring to either a torus or Euclidean space, we will denote by $\mathcal{A}$ the usual Borel $\sigma$-algebra on that space. We consider our torus to be equal to $\{ x \in \mathbb{R}^{d} \, : \, 0 \leq x_{i} < 1 \}$ as a set, with topology induced by the metric 
\be 
| x- y|  :=\max_{1 \leq i \leq d} \min (|x_{i} - y_{i} |, 1 - | x_{i} - y_{i} | )
\ee 
and equipped with the usual Borel $\sigma$-algebra and Lebesgue measure. \par 

For random variables $X,Y$ we will denote by $\mathcal{L}(X), \mathcal{L}(Y)$ their distributions and write $X \stackrel{D}{=} Y$ for $\mathcal{L}(X) = \mathcal{L}(Y)$. For any distribution $\mu$, we write $X \sim \mu$ for $\mathcal{L}(X) = \mu$. For any finite measure $\mu$ and measurable set $S$ with $\mu (S) > 0$,  we define the renormalized restriction $\mu |_{S}$ of $\mu$ to $S$ by setting $\mu |_{S} (A) = \frac{\mu(S \cap A)}{\mu(S)}$ for all measurable sets $A$. Finally, if $S$ is a finite state space, we denote by $\mathrm{Unif}(S)$ the uniform measure on $S$; if $S$ is a  compact measurable subset of $\mathbb{R}^{d}$ or $\mathbb{T}^{d}$ with positive Lebesgue measure, we denote by $\mathrm{Unif}(S)$ the probability measure inherited from the renormalized restriction of Lebesgue measure to $S$. \par 

We recall the definition of the \textit{Prokhorov distance} between two probability measures $\mu, \nu$ on a measure space $(\Omega, \mathcal{A})$ with metric $\phi$. For a set $S \in \mathcal{A}$ and constant $\delta > 0$, define the $\delta$-thickening of $S$ by
\be \label{EqDefThickening}
S_{\delta} := \left\{ x \in \Omega \, : \, \inf_{y \in S} \phi(x, y ) \leq \delta \right\}.
\ee 
The Prokhorov distance between $\mu$, $\nu$ is given by
\be 
d_{\mathrm{Prok}}(\mu,\nu) := \inf \{ \epsilon > 0 \, : \,  \forall A \in \mathcal{A}, \, \, \mu(A) \leq \nu(A_{\epsilon}) + \epsilon \text{ and } \nu(A) \leq \mu(A_{\epsilon}) + \epsilon  \}.
\ee 
When $X$, $Y$ are random variables, we will sometimes abuse this notation slightly and write $d_{\mathrm{Prok}}(X,Y)$ for $d_{\mathrm{Prok}}(\mathcal{L}(X),\mathcal{L}(Y))$. We recall  the following standard bound on the Prokhorov distance between two random variables $X,Y$ on the same probability space (see, \textit{e.g.}, the comment on Strassen's theorem in the proof of Theorem 3 of \cite{gibbs2002choosing} for reference):
\be \label{IneqProkBound}
d_{\mathrm{Prok}}(\mathcal{L}(X),\mathcal{L}(Y)) \leq \inf \{ \epsilon > 0 \, : \, \P[ \phi(X,Y) > \epsilon] \leq \epsilon \}. 
\ee

For any measure $\mu$ on a measure space and measurable function $f$ on the same space, we use the notation $\int_{x} f(x) \mu(dx)$ for the integral of $f$ with respect to $\mu$. Recall that a \textit{kernel} associated with a measure space $(\Omega,\mathcal{A})$ is a function $K \, : \, \Omega \times \mathcal{A} \mapsto [0,1]$ with the properties that:
\begin{enumerate}
\item the map $x \mapsto K(x,A)$ is $\mathcal{A}$-measurable for every $A \in \mathcal{A}$, and  
\item the map $A \mapsto K(x,A)$ is a probability measure on $(\Omega,\mathcal{A})$ for every $x \in \Omega$.
\end{enumerate}
A kernel is called \textit{reversible} with respect to a measure $\mu$ if
\be 
\int_{A} K(x,B) \mu(dx) = \int_{B} K(x,A) \mu(dx)
\ee  
for all measurable sets $A,B$. A measure $\mu$ is called a \textit{stationary measure} with respect to $K$ if
\be 
\int_{\Omega} K(x,A) \mu(dx) = \mu(A)
\ee 
for all measurable sets $A$. Recall that, if $\mu$ is reversible with respect to $K$, it is also stationary with respect to $K$. The kernel $K$ associated with $(\Omega, {\mathcal A})$ is said to be the \textit{generating} or \textit{transition} kernel $K$ of 
a Markov chain $\{X_{t}\}_{t \geq 0}$ with state space $\Omega$ if 
\be 
\P[X_{t+1} \in A | X_{t} = x] = K(x,A)
\ee 
for all $t \geq 0$ and all measurable sets $A$. 
 For $0 \leq \delta \leq 1$, we say that $K$ has \textit{holding probability at least $\delta$} if $\inf_{x \in \Omega} K(x, \{x\}) \geq \delta$. We also call such kernels $\delta$-lazy. 

Throughout this paper, our goal is to describe Markov chain mixing in terms of the popular \textit{total variation} metric. Recall that for a given measure space $(\Omega, \mathcal{A})$ and two measures $\mu, \nu$ on the $\sigma$-algebra $\mathcal{A}$, the total variation distance between $\mu$ and $\nu$ is given by:

\be
\vert \vert \mu - \nu \vert \vert_{TV} = \sup_{A \in \mathcal{A}} \left( \mu(A) - \nu(A) \right).
\ee
The \textit{mixing profile} of a Markov chain $\{ X_{t} \}_{t \in \mathbb{N}}$ (or more accurately, its generating kernel $K$) on the measure space $(\Omega,\mathcal{A})$ with stationary distribution $\pi$ is given by:
\be
\tau(\epsilon) := \inf \Big\{ t \, : \, \sup_{X_{0} = x \in \Omega} \vert \vert \mathcal{L}(X_{t}) - \pi \vert \vert_{TV} < \epsilon \Big\}.
\ee
By convention, the \textit{mixing time} of a Markov chain refers to $\tau := \tau \left( \frac{1}{4} \right)$.

\begin{remark} \label{RemarkEquivalenceMixingTimes}
\emph{Our notion of mixing time is slightly different from the notion of mixing time used in \cite{CLP99} (see Section 2 of that paper for their definition). We point out here that the main conclusion of the paper \cite{CLP99} holds for our definition as well: indeed, Theorem 1.4, Lemma 2.1 and Lemma 2.2 of \cite{PeSo11} together imply that our notion of mixing time and that used in \cite{CLP99} differ at most by multiplication by a universal constant factor for all Markov chains on finite state spaces. In particular, this means that it makes sense to talk about `improvements' to their Theorem 3.1, even though we use a different definition for the mixing time. }
\end{remark}

The purpose of our paper is to relate the mixing time of a kernel $K$ that  is $\frac{1}{2}$-lazy\footnote{The constant $\frac{1}{2}$ here is arbitrary; replacing the constant $\frac{1}{2}$ with any constant $0 < c < 1$ would result in similar bounds, though perhaps with slightly different constants.} to the mixing times of a class of lifted kernels.  In the literature on Markov chains, it is common to restrict attention to lazy kernels 
in order to avoid problems of parity that are not of great interest. This does mean, however, that our results do not apply to all kernels.  We now introduce our definition of the lift of a Markov transition kernel  on a continuous state space. 
 
\begin{defn} [Lift of a Markov transition kernel]
\label{def-lift}
Let $K$ be the generating kernel of a Markov chain $\{ X_{t} \}_{t \in \mathbb{N}}$ with stationary distribution $\pi$ on a measure space $(\Omega, \mathcal{A})$. We say that a kernel $\widehat{K}$ associated with a Markov chain $\{ \widehat{X}_{t} \}_{t \in \mathbb{N}}$ with stationary distribution $\widehat{\pi}$ on a measure space $(\widehat{\Omega},\widehat{\mathcal{A}})$ is a \textit{lift} of the kernel $K$ if there exists some measurable mapping $f \, : \, \widehat{\Omega} \rightarrow \Omega$ such that, for every  $S \in \mathcal{A}$ with $\pi(S) > 0$,  if $\mathcal{L}(\widehat{X}_{0}) = \widehat{\pi} \vert_{f^{-1}(S)}$ and $\mathcal{L}(X_{0}) = \pi \vert_{S}$, 
then 
\be \label{EqLiftDef}
f(\widehat{X}_{1}) \stackrel{D}{=} X_{1}. 
\ee
With some abuse of notation, we will sometimes also say that a Markov chain $\{\widehat{X}_t\}$ on $\widehat{\Omega}$ 
 is a lift of a Markov chain $\{X_t\}$ 
to mean that the generating kernel of $\{\widehat{X}_t\}$ is 
a lift of the generating  kernel  of $\{X_t\}$. 
\end{defn}

\begin{remark}
\label{rem-lift}
{\em 
Note that by choosing $S = \Omega$ in the definition above, we see that 
if $\mathcal{L}(\widehat{X}_{0}) = \widehat{\pi}$ and  $\mathcal{L}(X_{0}) = \pi$, then  equality \eqref{EqLiftDef}, along 
 with the fact that ${\mathcal L} ( \widehat{X}_1 )= \widehat{\pi}$ and ${\mathcal L}( X_1) = \pi$, implies that 
\[  \pi (S) = \widehat{\pi} (f^{-1} (S)),   \quad S \in {\mathcal A}. 
\]
In particular,  this implies that for any $S$ with $\pi(S) > 0$, we also have $\widehat{\pi} (f^{-1}(S)) > 0$ and so 
$\widehat{\pi}_{\vert_{f^{-1}(S)}}$ is well defined.   
Moreover, the relationship \eqref{EqLiftDef} is exactly
\be 
\int_{S} K(x,A) \pi(dx) =\int_{f^{-1}(S)} K(x,f^{-1}(A)) \widehat{\pi}(dx), \quad A \in {\mathcal A}, 
\ee 
for all measurable sets $A$. }
\end{remark}

This definition is a slight generalization of the notion of lifting in \cite{CLP99}. In particular, the observations in Remark \ref{rem-lift} show that our definition of lifting coincides with that in \cite[Section 3]{CLP99} when the state space $\Omega$ is countable and $\mathcal{A}$ is the usual $\sigma$-algebra on a countable state space. However, our definition also applies to Markov chains on general state spaces, in contrast to the definition in \cite{CLP99}, which only makes sense when $\Omega$ is finite or countable.  Throughout the paper, we will use the `hat' notation to refer to a lifted chain;  for example, $\widehat{\tau}$ is the mixing time of a lifted chain $\{ \widehat{X}_{t} \}_{t \in \mathbb{N}}$. \par 

\subsection{Statement of Main Results}
We now state Theorem \ref{LiftImpBd2}, which is  a weak version of our main result, Theorem \ref{LiftImpBd3}. Theorem \ref{LiftImpBd3}, which is proved in Section \ref{SecLiftingBound}, is much stronger than Theorem \ref{LiftImpBd2}  below; however, the associated assumptions, Assumptions \ref{AssumptionsRegularityConditions}, are more complicated to state than the conditions (stated as Assumptions \ref{AsmpSimpleIntroThm}), which are required for Theorem 1.   Although Assumptions \ref{AsmpSimpleIntroThm} consist of a long list of conditions, as explained in Remark \ref{RemarkMainThmAssumptions} below, these are intuitive and fairly straightforward to check. In Example \ref{ExSimpleCircle}, we give a short verification that all of them hold for a certain Markov chain on the unit circle.

\begin{assumptions} \label{AsmpSimpleIntroThm}
Suppose $\Omega$ is a compact subset of $\mathbb{R}^d$ or $\mathbb{T}^d$,  ${\mathcal A}$ is the associated Borel $\sigma$-algebra, and $K$ is a reversible transition kernel on $(\Omega, {\mathcal A})$  with stationary distribution $\pi$.
Assume that the following five properties are satisfied: 
\begin{enumerate}
\item For every $x \in \Omega$, there exist a constant $\alpha_x \in [\frac{1}{2}, 1)$, a countable collection $\mathcal{U}_{x}$ of elements of $\mathcal{A}$ and a probability distribution  $\{ K_{x,U} \}_{U \in \mathcal{U}_{x}}$ on $\mathcal{U}_{x}$ such that 
\be \label{EqKernelRepresentation}
\begin{split}
K(x,dy) 
&= \alpha_{x} \delta_{x} + (1-\alpha_{x}) \sum_{U \in \mathcal{U}_{x}} K_{x, U} \pi \vert_{U} (dy),
\end{split}
\ee 
and such that the map $x \mapsto \alpha_{x}$ is a measurable function from $\Omega$ to $[\frac{1}{2},1)$.

 Also, suppose that  $\pi_{\ast} > 0$, where 
\be \label{PiStarDef}
\pi_{\ast} :=\inf \{ \pi(U) \, : \, \exists \, x \in \Omega \, \text{ s.t. } \,  U \in \mathcal{U}_{x} \text{ and } K_{x,U} > 0 \}. 
\ee

\item  There exist constants $\beta, \gamma \in (0,  1]$ such that for all measurable sets $S$ with $\pi(S) \leq \beta$, we have 
\be \label{IneqAssEpsDeltFirstApp}
\int_{S}  (1-\alpha_{x}) \sum_{U \in \mathcal{U}_{x}} K_{x,U} \pi_{\vert_{U}}(dx) + \sup_{x \in S} \alpha_{x} \leq \gamma.
\ee 
\item For every $\epsilon > 0$, there exists $\delta > 0$ such that $\vert x - y \vert \leq \delta$ implies 
\be \label{IneqAssKernelCont}
|| K(x, \cdot) - K(y, \cdot) ||_{TV} \leq \min(\alpha_{x}, \alpha_{y}) + \epsilon.
\ee 

\item $\pi$ has a differentiable density $\rho$ with respect to Lebesgue measure that satisfies, for some $C, \, C^{-1}, \, D < \infty$,
\be \label{IneqDensityBlah}
C^{-1} < \rho(x) &< C \\
 | \rho'(x)| &< D
\ee 
for all $x \in \Omega$.

\item There exists some fixed $q \in \mathbb{N}, \psi > 0$ such that for every $x \in \Omega$ and $U \in {\mathcal U}_x$,
$K_{x,U} > 0$ implies that $U$ is the union of at most $q$ hypercubes of side length at least $\psi$.
\end{enumerate}
\end{assumptions}

\begin{thm} [Bound on Improvement Due to Lifting] \label{LiftImpBd2} 
Let  $K$ be a kernel satisfying Assumptions \ref{AsmpSimpleIntroThm}, and let $\tau$ denote its mixing time.   Then, for any lift $\widehat{K}$ of $K$, with associated mixing time $\widehat{\tau}$, we have: 
\be
\widehat{\tau} \geq  \frac{\sqrt{\gamma}}{16 \sqrt{ 2 \log \left( \frac{4}{\pi_{\ast}} \right) \log \left(\frac{2}{\sqrt{\beta}} \right) }} \sqrt{\tau},
\ee
where $\pi_{\ast}$ is the constant defined in equation \eqref{PiStarDef}.
\end{thm}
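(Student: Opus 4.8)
The plan is to derive Theorem~\ref{LiftImpBd2} by pairing an upper bound on the mixing time $\tau$ of $K$ with a matching lower bound on the mixing time $\widehat{\tau}$ of an arbitrary lift, both phrased through the conductance of $K$,
\[
\Phi \; := \; \inf\Big\{\, \tfrac{1}{\pi(S)}\textstyle\int_{S} K\big(x,\Omega\setminus S\big)\,\pi(dx) \;:\; S\in\mathcal{A},\ 0<\pi(S)\le\tfrac12 \,\Big\}.
\]
It suffices to establish: \textbf{(a)} $\tau \le A\,\gamma^{-1}\log(4/\pi_{\ast})\,\log(2/\sqrt{\beta})\,\Phi^{-2}$ for a suitable absolute constant $A$; and \textbf{(b)} $\widehat{\tau}\ge \tfrac14\,\Phi^{-1}$ for every lift $\widehat{K}$ of $K$. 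Granting these, $\widehat{\tau}^{2}\ge \tfrac1{16}\Phi^{-2}\ge \tfrac{\gamma\,\tau}{16A\,\log(4/\pi_{\ast})\log(2/\sqrt{\beta})}$, and since $16\sqrt2=\sqrt{512}$ the displayed inequality follows once the constants are assembled. Since Theorem~\ref{LiftImpBd2} is presented as a special case of the more general Theorem~\ref{LiftImpBd3}, a second (and, in the paper, more economical) route is to verify that Assumptions~\ref{AsmpSimpleIntroThm} imply the more technical regularity conditions Assumptions~\ref{AssumptionsRegularityConditions} and then invoke that theorem; but the substance is (a) and (b).

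For \textbf{(b)} I would adapt the argument for Theorem~3.1 of \cite{CLP99}, which goes through verbatim on general state spaces. Fix $S\in\mathcal{A}$ with $0<\pi(S)\le\tfrac12$ nearly attaining the infimum defining $\Phi$, and run the lift from $\mathcal{L}(\widehat{X}_{0})=\widehat{\pi}|_{f^{-1}(S)}$, which is well defined and satisfies $\widehat{\pi}(f^{-1}(S))=\pi(S)$ by Remark~\ref{rem-lift}. Since $\widehat{\pi}$ is stationary for $\widehat{K}$ and the warm start is dominated by $\pi(S)^{-1}\widehat{\pi}$, the law of $\widehat{X}_{t}$ is dominated by $\pi(S)^{-1}\widehat{\pi}$ for every $t$; combining this with the lift identity of Remark~\ref{rem-lift} applied to $A=\Omega\setminus S$ gives, for all $t$,
\[
\P\big[\widehat{X}_{t}\in f^{-1}(S),\ \widehat{X}_{t+1}\notin f^{-1}(S)\big]\;\le\;\frac{1}{\pi(S)}\int_{S}K\big(x,\Omega\setminus S\big)\,\pi(dx)\;\approx\;\Phi .
\]
Hence the expected number of times the projection $f(\widehat{X}_{s})$ exits $S$ before time $t$ is at most $\Phi t$, so $\P[f(\widehat{X}_{t})\in S]\ge 1-\Phi t$; if $\widehat{X}_{t}$ were within $\tfrac14$ of $\widehat{\pi}$ in total variation then, since $f$ pushes $\widehat{\pi}$ to $\pi$ and total variation contracts under pushforward, we would get $\P[f(\widehat{X}_{t})\in S]<\pi(S)+\tfrac14\le\tfrac34$, forcing $\Phi t>\tfrac14$. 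Because the point-start definition of $\widehat{\tau}$ bounds the distance from any initial law by convexity of total variation, this yields $\widehat{\tau}\ge\tfrac14\Phi^{-1}$.

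For \textbf{(a)} I would use the warm-start refinement of the evolving set method from Section~\ref{SecEvolvingSet}, which for a finite-state, $\tfrac12$-lazy, reversible chain bounds the mixing time by $O(\Phi^{-2})$ times $\log(1/\pi_{\ast})$ in place of $\log(1/\pi_{\min})$, with the extra $\gamma^{-1}\log(2/\sqrt{\beta})$ factor coming from the small-set control \eqref{IneqAssEpsDeltFirstApp}, which prevents the evolving set started from a warm start from landing on an almost-full set too early. To transfer this to the continuous kernel $K$ I would discretize: using the representation \eqref{EqKernelRepresentation}, Assumption~\ref{AsmpSimpleIntroThm}(5), and the density bounds \eqref{IneqDensityBlah}, partition $\Omega$ into hypercubes of side $\psi_{N}\downarrow 0$ refined compatibly with the side length $\psi$, lump $K$ to a finite reversible chain $K^{(N)}$ on the cells (reweighted by $\pi$), apply the discrete bound to $K^{(N)}$, and then pass to the limit using the total-variation continuity \eqref{IneqAssKernelCont} and the density regularity to obtain (a) for $K$ itself.

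The main obstacle is precisely this limiting step. One must show that the lumped chains $K^{(N)}$ approximate $K$ well enough that $\tau^{(N)}\to\tau$ and $\Phi^{(N)}\to\Phi$, while simultaneously the discrete surrogates for $\pi_{\ast}$, $\beta$ and $\gamma$ converge to their continuous values without degenerating — which is the whole point of the paper, since $\pi_{\ast}$ must stay bounded away from $0$. This is where Assumptions~\ref{AsmpSimpleIntroThm}(3)--(5) are used: the continuity estimate \eqref{IneqAssKernelCont} controls how trajectories of $K^{(N)}$ track those of $K$ (hence the convergence of mixing times), the bounds \eqref{IneqDensityBlah} keep the cell masses proportional to Lebesgue volume (hence $\pi_{\ast}^{(N)}\to\pi_{\ast}$), and the hypercube structure in \eqref{EqKernelRepresentation} together with Assumption~\ref{AsmpSimpleIntroThm}(5) ensures the sets $U\in\mathcal{U}_{x}$ are essentially unions of cells, so that the kernel representation \eqref{EqKernelRepresentation} survives discretization. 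By comparison, making (b) rigorous on a continuous space is routine — the only care needed is with the warm start $\widehat{\pi}|_{f^{-1}(S)}$ (well posed by Remark~\ref{rem-lift}) — and tracking the numerical constants so that they combine into $1/(16\sqrt2)$ is mechanical.
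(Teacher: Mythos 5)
Your proposal is correct and follows essentially the same route as the paper: the paper proves Theorem \ref{LiftImpBd2} by checking that Assumptions \ref{AsmpSimpleIntroThm} imply Assumptions \ref{AssumptionsRegularityConditions} and invoking Theorem \ref{LiftImpBd3}, whose proof is precisely your steps (a) and (b) --- the chain $\widehat{\tau} \geq \tfrac{1}{4\widehat{\Phi}} \geq \tfrac{1}{4\Phi}$ combined with the warm-start evolving set bound of Theorem \ref{EvSetBd1} applied to the discretized kernels $K^{(N)}$, with the limiting statements $\liminf_N \tau^{(N)} \geq \tau$, $\Phi^{(N)} \to \Phi$, and convergence of $\pi_*^{(N)}, \beta^{(N)}, \gamma^{(N)}$ supplied by Proposition \ref{PropConvDiscChain}. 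Your constant bookkeeping ($A=32$, $4\sqrt{32}=16\sqrt{2}$) also matches the paper's.
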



\begin{remark}
\label{RemarkMainThmAssumptions}
\emph{We comment briefly on the various conditions in Assumptions \ref{AsmpSimpleIntroThm}. These conditions split roughly into two groups: conditions (1-3) are fundamental to our approach, while conditions (4-5) are simple regularity conditions that, as shown in Theorem \ref{LiftImpBd3}, can be substantially weakened.}
\begin{enumerate}

\item \emph{This regularity condition is the main requirement of our method. It allows us to replace the term $\min_{x} \pi(x)$ in inequality \eqref{IneqClpThm31} (which is always zero for chains on continuous state spaces) with the term $\pi_{\ast}$ (which, by the right choice of representation in \eqref{EqKernelRepresentation}, can be made non-zero, even on continuous state spaces). As mentioned in the Introduction, no nontrivial bound on the mixing time is possible for Markov chains on continuous state spaces if this requirement is removed entirely. }

\emph{ Moreover, we point out that, for $1/2$-lazy Markov transition kernels $K$ on discrete state spaces, we can always find a representation analogous to \eqref{EqKernelRepresentation}:}
\be
K(x,y) = \alpha_{x} \delta_{x} + (1-\alpha_{x}) \sum_{U \in \mathcal{U}_{x}} K_{x, U} \pi \vert_{U} (y)
\ee
\emph{for some collection $\{\alpha_{x}\}_{x \in \Omega}$ and $\{\mathcal{U}_x\}_{x \in \Omega}$.  In particular,  for each $x \in \Omega$, one can then simply choose $\alpha_{x} = K(x,x)$,  $K_{x, \{y \}} = K(x,y)$ for all $y \in \Omega$ and $K_{x,U} = 0$ for all sets $U$ with $| U | > 1$.}

\item \emph{This bound is used to obtain a one-step minorization condition, which is useful when applying our evolving set bound. It is straightforward to generalize the requirement to a $k$-step minorization condition for some $k > 1$.}
\item \emph{This continuity condition holds for most natural chains, although it can be slightly weakened.}

\item \emph{This is not necessary, but allows our main result to be stated more simply. See Theorem \ref{LiftImpBd3} below for a more general version of the result which removes this requirement.}

\item \emph{This assumption is vastly stronger than what is required; it is only used to ensure that the boundaries of sets $U$ that satisfy $K_{x,U} > 0$ are not enormous compared to their volume. Theorem \ref{LiftImpBd3} below replaces this with a much weaker but more complicated sufficient condition.}
\end{enumerate}
\end{remark}

We provide an application of Theorem \ref{LiftImpBd2} to a simple random walk on the torus $\mathbb{T}$ that is analogous to the simple random walk on the cycle studied in \cite{diaconis2000analysis} and \cite{CLP99}:

\begin{example} \label{ExSimpleCircle}
\emph{Let $\Omega =\mathbb{T}$, fix $0 < c < \frac{1}{4}$, and define the kernel $K(x,\cdot)$ to be the mixture $\frac{1}{2} \delta_{x} + \frac{1}{2} \mathrm{Unif}(\{ y \in \mathbb{T} \, : \, |x - y | \leq c \} )$; this has stationary measure $\pi = \mathrm{Unif}(\mathbb{T})$. We claim that $K$ satisfies the assumptions of Theorem \ref{LiftImpBd2}. Going through the assumptions in the same order that they are given in Assumptions \ref{AsmpSimpleIntroThm}, we have:}

\begin{enumerate}
\item \emph{$K$ has holding probability exactly $\frac{1}{2}$. We now write $K$ in the form of  equation \eqref{EqKernelRepresentation}. We let $\alpha_{x} = \frac{1}{2}$, we let $\mathcal{U}_{x}$ be the singleton containing exactly the set  $\{ y \in \mathbb{T} \, : \, | x - y | < c \}$, and we let $K_{x, \{ y \in \mathbb{T} \, : \, | x - y | < c \} } = 1$. For this representation, we clearly have $\pi_{\ast} = 2c > 0$.}

\item  \emph{For any $0 < \beta < \frac{c}{2}$ and measurable set $S$ with $\pi(S) \leq \beta$,} 
\be 
\int_{x \in S}  (1-\alpha_{x}) \sum_{U \in \mathcal{U}_{x}} K_{x,U} \frac{1}{\pi(U)} \pi(dx) + \sup_{x \in S} \alpha_{x} \leq \frac{\beta}{2c} + \frac{1}{2}.
\ee 
\emph{Thus, for any $0 < \beta < \frac{c}{2}$, inequality \eqref{IneqAssEpsDeltFirstApp} is satisfied with constant $\gamma = \frac{\beta}{2c} + \frac{1}{2}$. Thus, this assumption is satisfied with constants $\beta = \frac{c}{4}$, $\gamma = \frac{5}{8}$.} 

\item \emph{ This property is satisfied with $\delta(\epsilon) = 2 c \epsilon$ because} 
\be 
\| K(x, \cdot) - K(y, \cdot) \|_{TV} &\leq \frac{1}{2} \| \delta_{x} - \delta_{y} \|_{TV} \\
&+  \frac{1}{2} \| \mathrm{Unif}( \{ z \in \mathbb{T} \, : \, |x-z| \leq c \}) - \mathrm{Unif}( \{ z \in \mathbb{T} \, : \, |y-z| \leq c \}) \|_{TV} \\
&\leq  \frac{1}{2} + \frac{|x - y|}{2c}.
\ee  

\item \emph{$\pi$ has density exactly 1.}

\item \emph{For all $x \in [0,1]$, there is a single set $U$ in ${\mathcal U}_x$, which  is a line segment of length $2c$.}
\end{enumerate}

\emph{Thus, Theorem \ref{LiftImpBd2} applies to this Markov chain. Since this random walk has mixing time $\tau = \Theta \left( c^{-2} \right)$ (see appendix \ref{AppSimpleCoup}), our result implies that any lift must have mixing time at least $\widehat{\tau} = \mathbf{\Omega} \left( \frac{1}{ c \, \log \left( c^{-1} \right)} \right)$. This example is closely analogous to the random walk on the cycle analyzed in \cite{diaconis2000analysis, CLP99}, with $c^{-1}$ playing the role of the size of the state space $n$. As in the analogous discrete walk, we do not know if this bound is tight - that is, whether it is possible to find a sequence of lifts with $\widehat{\tau} = O \left( \frac{1}{ c \, \log \left( c^{-1} \right)} \right)$ as $c^{-1} \rightarrow \infty$.} 
\end{example} 

\begin{remark}
\emph{We mention without proof that Theorem \ref{LiftImpBd2} can easily be extended to discrete Markov chains, either by making small modifications to our proof or by noting that a Markov chain on a finite state space $\Omega$ can easily be represented as a Markov chain on the continuous state space $\sqcup_{\omega \in \Omega} [0,1]_{\omega}$ - that is, the disjoint union of $|\Omega|$ copies of the interval $[0,1]$.}
\end{remark}

Our main tool for the proof of Theorem \ref{LiftImpBd2} will be a conductance bound. Given a transition kernel $K$ and stationary distribution $\pi$ of a Markov chain on a finite state space $\Omega$, for $t \in \mathbb{N}$, define 
\be \label{def-Kt}
K^{0}(x,y) &= \textbf{1}_{x}(y), \\
K^{t+1}(x,y) &= \sum_{z \in \Omega} K(x,z) K^{t}(z,y).
\ee
In the same setting,  for $x,y \in \Omega$, define the ergodic flow 
\be 
Q(x,y) := \pi(x) K(x,y)
\ee  and for $A, B \subset \Omega$, define 
\be \label{EqDefQCombinedChain}
Q(A,B) := \sum_{x \in A, y \in B} Q(x,y), \ee where 
the right-hand side is to be interpreted as zero when either $A$ or $B$ is the empty set.  When $\{x\}$ is a singleton, we will write $Q(x,B)$ for $Q(\{x\},B)$. Similarly, when $\{ y \}$ is a singleton, we will write $Q(A,y)$ for $Q(A, \{y\})$. Then define the conductance of a set $S$ with $\pi(S) > 0$ to be  
\be
\Phi(S) := \frac{Q(S, S^{c})}{\pi(S)}.
\ee
Analogously, for Markov chains on a continuous state space $\Omega \subset \mathbb{R}^{n}$  with transition kernel $K$ and stationary distribution $\pi$, define 
\be 
\Phi(S) := \frac{\int_{S}  K(x,S^{c}) \pi(dx)}{\pi(S)}.
\ee In both cases, define the conductance of the chain to be 
\be \label{EqDefConductanceAsInf}
\Phi := \inf_{S \, : \, 0<\pi(S) \leq \frac{1}{2}} \Phi(S). 
\ee 

\begin{assumption} \label{AssumptionsEvoSet}
Let $K$ be a $1/2$-lazy reversible transition kernel on a finite state space $\Omega$. 
Let $\pi$ be  the associated stationary distribution and let  $Q$ be defined as in  \eqref{EqDefQCombinedChain}.  
Assume that there exist constants $0 < \beta, \gamma < 1$ such that for any set $S \subset \Omega$ with $\pi(S) \leq \beta$ and $y \in \Omega$ with $\pi(y) > 0$, we have 
\be \label{EqEpsDelt1}
\frac{Q(S,y)}{\pi(y)} \leq 1- \gamma.
\ee
\end{assumption}

\begin{remark}
{\em We mention that inequality \eqref{EqEpsDelt1} below is the discrete analogue of inequality \eqref{IneqAssEpsDeltFirstApp}, and the condition in inequality \eqref{IneqAssEpsDeltFirstApp} is imposed exactly so that our discretized chains will satisfy inequality \eqref{EqEpsDelt1}. To reinforce this connection, we have used the same constants $\beta, \gamma$ in both equations, despite the fact that the inequalities are not identical.}
\end{remark}

We recall from Remark \ref{RemarkMainThmAssumptions}(1) that since $\Omega$ is finite, and the kernel $K$ 
is $1/2$-lazy, $K$ admits the representation  

 \be \label{EqFinChainKRep}
 K(x,\cdot) = \alpha_{x} \delta_{x}(\cdot) + (1 - \alpha_{x}) \sum_{U \subset \Omega} K_{x,U} \pi|_{U}(\cdot),
 \ee  
 where for all $x \in \Omega$ the constants $\{K_{x,U} \}_{ U \subset \Omega}$ are a probability distribution on the set of subsets of $\Omega$  and where $\frac{1}{2} \leq \alpha_{x} < 1$.   We fix such a representation, and 
define $\pi_{\ast}$ as in equation \eqref{PiStarDef}.

In this setting, we are able to establish the following conductance bound, which may be of independent interest. 

\begin{thm} [Uniform Evolving Set Bound] \label{EvSetBd1} 
Suppose the transition kernel $K$ satisfies Assumption \ref{AssumptionsEvoSet} and has conductance $\Phi \leq \frac{1}{4}$. Let $\tau$ be the associated mixing time,  and let $\pi_{\ast}$ be as defined above.  Then we have: 
\be \label{EqEvoSetThmRes}
\frac{1}{4 \Phi} \leq \tau \leq \frac{32}{\gamma \Phi^{2} } \log \left( \frac{4}{\pi_{\ast}} \right) \log \left(\frac{\sqrt{\beta}}{2} \right).
\ee
\end{thm}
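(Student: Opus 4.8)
The plan is to establish the two inequalities in \eqref{EqEvoSetThmRes} separately. The left-hand inequality is a routine conductance leakage estimate; the right-hand inequality is the substance, and is where the evolving-set apparatus and the \emph{warm start} idea enter.

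\emph{Lower bound.} Since $\Omega$ is finite, fix a set $S$ with $0 < \pi(S) \le \tfrac12$ and $\Phi(S) = \Phi$, and run the chain from $\mu_{0} := \pi|_{S}$, writing $\mu_{t} := \mu_{0}K^{t}$. Reversibility gives $\mu_{t+1}(y)/\pi(y) = \sum_{z}(\mu_{t}(z)/\pi(z))K(y,z) \le \max_{z}\mu_{t}(z)/\pi(z)$, so $\mu_{t}(z)/\pi(z) \le 1/\pi(S)$ for all $t$ and $z$; plugging this into $\mu_{t+1}(S^{c}) \le \mu_{t}(S^{c}) + \sum_{y \in S}\mu_{t}(y)K(y,S^{c})$ yields the leakage bound $\mu_{t}(S^{c}) \le t\Phi$. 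Hence $\|\mu_{t} - \pi\|_{TV} \ge \mu_{t}(S) - \pi(S) \ge \tfrac12 - t\Phi > \tfrac14$ whenever $t < \tfrac{1}{4\Phi}$. Since the total-variation distance from $\pi$ of a mixture of point masses is at most the maximum of the corresponding single-point distances, and $\max_{x}\|K^{t}(x,\cdot) - \pi\|_{TV}$ is non-increasing in $t$, this forces $\tau \ge \tfrac{1}{4\Phi}$.

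\emph{Upper bound: the warm-start evolving-set estimate.} I will use the evolving-set process $\{S_{n}\}$ attached to $K$ and $\pi$: from $S_{n} = S$, draw $\theta \sim \mathrm{Unif}[0,1]$ and set $S_{n+1} = \{ y : Q(S,y)/\pi(y) \ge \theta \}$. Two facts drive the argument. First, $\pi(S_{n})$ is a bounded martingale, and there is the Morris--Peres duality identity \cite{MoPe06} bounding the $\chi^{2}$-type distance $\|\mu K^{n}/\pi - 1\|_{2,\pi}$ (hence, by Cauchy--Schwarz, $2\|\mu K^{n} - \pi\|_{TV}$) of a start $\mu$ in terms of the root potential $\mathbb{E}_{\mu}[\sqrt{\pi(S_{n})(1-\pi(S_{n}))}]$ of the evolving set started from $\mu$. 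Second, the hypotheses ($\Phi \le \tfrac14$, $\tfrac12$-laziness) yield a one-step contraction $\mathbb{E}[\sqrt{\pi(S_{n+1})(1-\pi(S_{n+1}))}\mid S_{n}] \le (1-c\Phi^{2})\sqrt{\pi(S_{n})(1-\pi(S_{n}))}$ for a universal $c \in (0,1)$. Iterating from a start $\mu$ whose density satisfies $\|\mu/\pi\|_{\infty} \le M$ (so $\|\mu/\pi - 1\|_{2,\pi}^{2} \le M - 1$) then gives $\|\mu K^{n} - \pi\|_{TV} \le \tfrac12\sqrt{M}\,(1-c\Phi^{2})^{n}$, which is $< \tfrac18$ once $n = O(\Phi^{-2}\log M)$. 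The essential gain is that the warmth $M$ replaces the $1/\min_{x}\pi(x)$ of the usual cold-start bound (cf. Theorem 17.10 of \cite{LPW09}).

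\emph{Producing a warm start, and the main obstacle.} It remains to reach, from every $x \in \Omega$ and in a controlled number of steps, a distribution close to being $M$-warm with $M$ depending only on $\beta$ and $\pi_{\ast}$. Split $\Omega$ into heavy points ($\pi(x) > \beta$) and light points ($\pi(x) \le \beta$). If $x$ is heavy, $\delta_{x}$ is already $(1/\beta)$-warm, so the estimate above with $M = 1/\beta$ gives $\|K^{n}(x,\cdot) - \pi\|_{TV} < \tfrac14$ after $O(\Phi^{-2}\log(1/\beta))$ steps. If $x$ is light, applying \eqref{EqEpsDelt1} with $S = \{x\}$ gives $\alpha_{x} = K(x,x) \le 1-\gamma$; iterating the representation \eqref{EqFinChainKRep} and using that $\|\cdot/\pi\|_{\infty}$ is non-increasing under $K$ while each mixing component $\pi|_{U}$ has $\|\pi|_{U}/\pi\|_{\infty} \le 1/\pi_{\ast}$, one obtains $K^{m}(x,\cdot) = \alpha_{x}^{m}\delta_{x} + (1-\alpha_{x}^{m})\zeta_{m}$ with $\|\zeta_{m}/\pi\|_{\infty} \le 1/\pi_{\ast}$. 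Taking $m_{0} = O(\gamma^{-1})$ so that $\alpha_{x}^{m_{0}} \le (1-\gamma)^{m_{0}} \le \tfrac18$, the law at time $m_{0}$ is within $\tfrac18$ in total variation of the $(1/\pi_{\ast})$-warm measure $\zeta_{m_{0}}$; running $\zeta_{m_{0}}$ for a further $n_{0} = O(\Phi^{-2}\log(1/\pi_{\ast}))$ steps brings it within $\tfrac18$ of $\pi$, so $\|K^{m_{0}+n_{0}}(x,\cdot) - \pi\|_{TV} < \tfrac14$. Taking the larger of the two cases and bounding $m_{0}+n_{0}$ (with the universal constants and the inequalities $\tfrac1\gamma \ge 1$, $\tfrac1{\Phi^{2}} \ge 16$) crudely by the product on the right of \eqref{EqEvoSetThmRes}, which dominates this sum, yields the claimed bound on $\tau$. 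The routine parts are the leakage estimate and the final arithmetic; the real work is (i) casting the Morris--Peres potential argument, normally run from a single point, in the warm-start form above — verifying that it produces the factor $\sqrt{M}$ in place of $1/\sqrt{\min_{x}\pi(x)}$, and pinning down $c$ under only $\Phi \le \tfrac14$ and $\tfrac12$-laziness — and (ii) the decomposition $K^{m_{0}}(x,\cdot) = \alpha_{x}^{m_{0}}\delta_{x} + (1-\alpha_{x}^{m_{0}})\zeta_{m_{0}}$ for light $x$, where one must simultaneously control the residual point mass and certify that the complementary part stays $(1/\pi_{\ast})$-warm.
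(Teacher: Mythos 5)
Your argument is correct in outline, but it is a genuinely different proof from the paper's, so a comparison is worthwhile. The paper also uses the laziness decomposition \eqref{EqFinChainKRep} to reduce point starts to warm starts of the form $\pi\vert_U$ with $\pi(U)\geq\pi_{\ast}$, but the roles of $\beta$ and $\gamma$ are entirely different there: after proving the set-start identity $K^{t}(S,y)=\tfrac{\pi(y)}{\pi(S)}\P_{S_{0}=S}[y\in S_{t}]$ (Lemma \ref{LemTransProb}) and driving $\E[\sqrt{\pi(M_{t})}]$ below $\sqrt{\beta}$ in $A=O(\Phi^{-2}\log(2/\sqrt{\beta}))$ steps via the Morris--Peres contraction, the paper uses Assumption \ref{AssumptionsEvoSet} as a \emph{minorization for the evolving set itself}: once $\pi(S_{t})\leq\beta$, the bound $\sup_{y}Q(S_{t},y)/\pi(y)\leq 1-\gamma$ forces $S_{t+1}=\emptyset$ with probability at least $\gamma$, and $B=O(\gamma^{-1}\log(4/\pi_{\ast}))$ such blocks plus optional stopping give $|K^{t}(S,y)-\pi(y)|\leq e^{-C}\pi(y)/\pi(S)$. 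That block structure is why the paper's bound is a \emph{product} of $\gamma^{-1}$, $\Phi^{-2}$, $\log(4/\pi_{\ast})$ and $\log(2/\sqrt{\beta})$. You instead invoke \eqref{EqEpsDelt1} only through singletons (so that $\pi(x)\leq\beta$ forces $\alpha_{x}\leq K(x,x)\leq 1-\gamma$), split starting points into heavy and light, and apply a warm-start contraction; this yields a \emph{sum} of order $\gamma^{-1}+\Phi^{-2}\log(1/(\beta\pi_{\ast}))$, which is quantitatively stronger than the stated product (your final domination check is right, given $\Phi\leq\tfrac14$) and uses a strictly weaker hypothesis. Two remarks on the points you flag as the real work. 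First, the warm-start estimate $\|\mu K^{n}-\pi\|_{TV}\leq\tfrac12\sqrt{M}(1-\Phi^{2}/2)^{n}$ is most cleanly obtained not from the evolving-set duality (which is naturally a point- or set-start statement) but from reversibility, which Assumption \ref{AssumptionsEvoSet} grants: $\tfrac12$-laziness gives $\lambda_{\min}\geq 0$, Cheeger gives $\lambda_{2}\leq 1-\Phi^{2}/2$, and $2\|\mu K^{n}-\pi\|_{TV}\leq\|\mu K^{n}/\pi-1\|_{L^{2}(\pi)}\leq\sqrt{M-1}\,(1-\Phi^{2}/2)^{n}$; alternatively, your $\zeta_{m_{0}}$ is a convex combination of measures $(\pi\vert_{U})K^{j}$ with $\pi(U)\geq\pi_{\ast}$, so the set-start route via Lemma \ref{LemTransProb} also suffices. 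Second, your decomposition $K^{m}(x,\cdot)=\alpha_{x}^{m}\delta_{x}+(1-\alpha_{x}^{m})\zeta_{m}$ with $\zeta_{m}$ being $(1/\pi_{\ast})$-warm is a correct induction, since warmth is non-increasing under a kernel with stationary measure $\pi$. One typographical point: the right-hand side of \eqref{EqEvoSetThmRes} should be read with $\log(2/\sqrt{\beta})$, as in Theorem \ref{LiftImpBd2}; as printed, $\log(\sqrt{\beta}/2)<0$.
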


This bound is non-trivial when $\pi_{\ast} > 0$, and 
can be viewed as a variant of Theorem 1 of \cite{MoPe06}; however it crucially differs from that bound in that it replaces $\pi_{\ast}' := \min_{x \in \Omega} \pi(x)$ with the larger $\pi_{\ast}$,  at the cost of introducing new terms that depend on $\beta$ and $\gamma$.  
Although we do not focus on this, Theorem \ref{EvSetBd1} also allows us, in some cases,  to improve the bounds in \cite{CLP99} even on discrete spaces when $\pi_{\ast}$ is much larger than $\pi_{\ast}'$ - see Appendix \ref{AppSimpleImp} for some details. \par

\section{Improved Evolving Set Bound} \label{SecEvolvingSet}

In this section, we prove Theorem \ref{EvSetBd1}. 

\begin{proof}[Proof of Theorem \ref{EvSetBd1}]
We follow the basic structure and notation of the proof of Theorem 17.10 in \cite{LPW09}, with the new ingredients being an improved mixing bound for `warm starts' and a minorization argument. We first recall the \textit{evolving set process} $\{ S_{t} \}_{t \in \mathbb{N}}$. For a transition kernel $K$ on finite state space $\Omega$, the evolving set process associated with $K$ is a Markov chain on subsets of $\Omega$.  Let $U_t, t \in \mathbb{N}$, be i.i.d.\ $\mathrm{Unif}([0,1])$ random variables.  To generate $S_{t+1}$ from $S_{t}$, set $S_{t+1} = \{ y \in \Omega \, : \, \frac{Q(S_{t},y)}{\pi(y)} \geq U_{t+1} \}$. We need a generalization of Lemma 17.12 from \cite{LPW09}, relating the transition kernel $K$ of the Markov chain to this evolving set process for starting distributions that are not concentrated on a single point. 
For $S \subseteq \Omega$ with $\pi(S) > 0$, define 
\be
\label{def-kts}
K^{t}(S,y) := \frac{1}{\pi(S)} \sum_{x \in S} \pi(x) K^{t}(x,y), 
\ee
with $K^t$ defined as in \eqref{def-Kt}.

\begin{lemma} [Transition Probabilities] \label{LemTransProb} 
Denote by $\{ S_{t} \}_{t \in \mathbb{N}}$ the evolving set process associated with $K$, and for $S \subseteq \Omega$, 
write $\P_{S_{0} = S}[\cdot]$ for $\P[\cdot \vert S_{0} = S]$ and let $\E_{S_{0} =S}$ denote expectation under $\P_{S_{0} = S}$. Then, for $t \in \mathbb{N}$ and $y \in \Omega$,
\be \label{IneqEvosetTranProb}
K^{t}(S,y) = \frac{\pi(y)}{\pi(S)} \P_{S_{0} = S}[y \in S_{t}].
\ee
\end{lemma}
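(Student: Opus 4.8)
**The plan is to prove Lemma \ref{LemTransProb} by induction on $t$, mimicking the proof of Lemma 17.12 in \cite{LPW09} but tracking the extra weighting by the starting set $S$.**

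First I would record the base case $t=0$. When $t=0$ we have $K^{0}(S,y) = \frac{1}{\pi(S)}\sum_{x\in S}\pi(x)\mathbf{1}_{x}(y) = \frac{\pi(y)}{\pi(S)}\mathbf{1}_{S}(y)$, while $\P_{S_{0}=S}[y\in S_{0}] = \mathbf{1}_{S}(y)$, so the two sides agree. For the inductive step, I would write $K^{t+1}(S,y) = \sum_{z\in\Omega} K^{t}(S,z) K(z,y)$, apply the induction hypothesis to rewrite $K^{t}(S,z) = \frac{\pi(z)}{\pi(S)}\P_{S_{0}=S}[z\in S_{t}]$, and use reversibility $\pi(z)K(z,y) = \pi(y)K(y,z)$ to get $K^{t+1}(S,y) = \frac{\pi(y)}{\pi(S)}\sum_{z\in\Omega} K(y,z)\,\P_{S_{0}=S}[z\in S_{t}] = \frac{\pi(y)}{\pi(S)}\,\E_{S_{0}=S}\!\left[\sum_{z\in S_{t}} K(y,z)\right] = \frac{\pi(y)}{\pi(S)}\,\E_{S_{0}=S}\!\left[\frac{Q(S_{t},y)}{\pi(y)}\right]$, using the definition $Q(S_{t},y)=\sum_{z\in S_{t}}\pi(z)K(z,y)$ together with reversibility once more.

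The remaining point is to identify $\E_{S_{0}=S}\!\left[\frac{Q(S_{t},y)}{\pi(y)}\right]$ with $\P_{S_{0}=S}[y\in S_{t+1}]$. This is exactly the one-step transition rule of the evolving set process: conditionally on $S_{t}$, the point $y$ lies in $S_{t+1}$ precisely when $U_{t+1}\leq \frac{Q(S_{t},y)}{\pi(y)}$, and since $U_{t+1}\sim\mathrm{Unif}([0,1])$ is independent of $S_{t}$ with $\frac{Q(S_{t},y)}{\pi(y)}\in[0,1]$, we get $\P[y\in S_{t+1}\mid S_{t}] = \frac{Q(S_{t},y)}{\pi(y)}$; taking expectations over $S_{t}$ (started from $S_{0}=S$) and using the tower property closes the induction. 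One should also note the easy consistency check that when $S=\{x\}$ is a singleton this reduces to Lemma 17.12 of \cite{LPW09}.

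This argument is essentially routine bookkeeping, so there is no serious obstacle; the only place demanding any care is making sure the reversibility identity is applied in the correct direction at each use (once to move $K$ from $z$ to $y$, and once inside the definition of $Q$), and confirming that the fraction $\frac{Q(S_{t},y)}{\pi(y)}$ is genuinely in $[0,1]$ so that it equals the conditional probability of the uniform event — the upper bound here is immediate since $Q(S_{t},y)\leq Q(\Omega,y)=\pi(y)$ by stationarity.
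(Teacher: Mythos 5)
Your proof is correct and follows essentially the same induction as the paper: expand $K^{t+1}(S,y)=\sum_{z}K^{t}(S,z)K(z,y)$, apply the induction hypothesis, recognize $\E_{S_{0}=S}[Q(S_{t},y)]$, and invoke the one-step rule $\P[y\in S_{t+1}\mid S_{t}]=Q(S_{t},y)/\pi(y)$. The only cosmetic difference is that you invoke reversibility twice in a way that cancels out; the paper reaches $\E_{S_{0}=S}[Q(S_{t},y)]$ directly from the definition of $Q$ without needing reversibility at all.
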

\begin{proof}
We first prove that equality \eqref{IneqEvosetTranProb} holds by induction on $t$.
Recalling that $K^{0}$ is the identity matrix, it is clear that the equation holds when $t=0$. For $t > 0$, using 
\eqref{def-kts}, \eqref{def-Kt},    \eqref{IneqEvosetTranProb} and \eqref{EqDefQCombinedChain}, 
we then have that
\begin{align*}
K^{t+1}[S,y] &= \sum_{z \in \Omega} K^{t}(S,z) K(z,y) \\
&= \sum_{z \in \Omega} \frac{\pi(z)}{\pi(S)} \P_{S_{0} = S} [z \in S_{t}] K(z,y) \\
&=\frac{1}{\pi(S)} \sum_{z \in \Omega} \E_{S_{0} = S}[\textbf{1}_{\{z \in S_{t}\}} \pi(z) K(z,y)] \\
&= \frac{1}{\pi(S)} \E_{S_{0} = S}[Q(S_{t},y)] \\
&= \frac{\pi(y)}{\pi(S)} \P_{S_{0} = S}[y \in S_{t+1}],
\end{align*}
where the last equality follows from the dynamics of the evolving set process.  
This concludes the proof of \eqref{IneqEvosetTranProb}. 
\end{proof}

Next, for $t \in \mathbb{N}$, define $M_{t}$ to be equal to $S_{t}$ if $\pi(S_{t}) \leq \frac{1}{2}$, and to be equal to $S_{t}^{c}$ otherwise. Lemma 17.16 of \cite{LPW09} states that, for all $t \in \mathbb{N}$,
\be \label{EqRatioDec}
\E \left[ \sqrt{\frac{\pi(M_{t+1})}{\pi(M_{t})}} \Bigg| S_{t} \right] \leq 1 - \frac{\Phi^{2}}{2},
\ee
where $\Phi$ is defined in \eqref{EqDefConductanceAsInf}.
By Markov's inequality, this implies that for all $\alpha > 0$, 
\be \label{IneqMarkIneqEvoSet}
\begin{split}
\P_{S_{0} = S}[\pi(M_{t}) \geq \alpha] &\leq  \frac{\E_{S_{0} = S}[\sqrt{\pi(M_t)}]}{\sqrt{\alpha}}\\
& \leq \frac{\E_{S_{0} = S}[\sqrt{\pi(M_0)}]}{\sqrt{\alpha}} \left(1- \frac{\Phi^2}{2}\right)^t \\
&\leq \left( 1 - \frac{\Phi^{2}}{2} \right)^{t} \sqrt{\frac{\min(\pi(S), 1- \pi(S))}{\alpha}}.
\end{split}
\ee
In order to bound the probability that $M_{T} = \emptyset$, we consider times $T$ satisfying $T > B(A+1)$ for some $A,B>0$ and then estimate the chance that $M_{t}$ becomes empty during each of the first $A+1$ blocks of length $B$. We then bound the probability that $M_{A+1} = \emptyset$ uniformly in possible values of $M_{0}$, and take independent bounds for each region.  In particular, let $\beta$ be the constant in Assumption \ref{AssumptionsEvoSet} and note that, by inequalities \eqref{IneqMarkIneqEvoSet} and \eqref{EqEpsDelt1}, and the dynamics of $M_{t}$ and $S_{t}$, we have 
\be
\P_{S_{0} = S}[M_{A+1} = \emptyset] &\geq \P_{S_{0} = S}[\pi(M_{A}) \leq \beta] \, \inf_{S' \, : \, \pi(S') \leq \beta} \P[M_{A+1} = \emptyset \vert M_{A} = S'] \\
&\geq \left(1 - \left( 1 - \frac{\Phi^{2}}{2} \right)^{A} \sqrt{\frac{\pi(S)}{\beta}} \right) \\
&\times (1 - \sup  \{ \frac{Q(S',y)}{\pi(y)} \, : \, y \in \Omega, \, \pi(y) > 0, \, S' \subset \Omega, \, \pi(S') \leq \beta \} )\\
&\geq \left(1 -  \frac{( 1 - \frac{\Phi^{2}}{2} )^{A}}{\sqrt{\beta}} \right)\gamma, \\
\ee
where the last line uses Assumption \ref{AssumptionsEvoSet} and the fact that $\pi(S) \leq 1$.
Fix $ 0 < C < \infty$. Looking at $B = \lceil \frac{2 C}{ \gamma} \rceil$ sequences of length $A+1 = \lceil \frac{ \log ( \frac{\sqrt{\beta}}{2}) }{\log(1-\frac{\Phi^{2}}{2})} \rceil + 1$, this implies
\be \label{MainNullnessBound}
\begin{split}
\P_{S_{0} = S}[M_{B(A+1)} = \emptyset] &\geq 1 - \left( \sup_{U \subset \Omega} \P_{S_{0} = U}[M_{A+1} \neq \emptyset] \right)^{B} \\
&\geq 1 - \left( 1 -  \gamma \left(1 - \frac{(1 - \frac{\Phi^{2}}{2})^{A}}{\sqrt{\beta}} \right) \right)^{B} \\
&\geq 1 - \left(1 -  \frac{\gamma}{2} \right)^{B} \\
&\geq 1 - e^{-C}. 
\end{split}
\ee

Next, recall from Lemma \ref{LemTransProb} that, for all $t > 0$, all $S \subset \Omega$ and all $y \in \Omega$:
\be \label{EqToBeContinuedEvoSet}
\begin{split}
\vert K^{t}(S,y) - \pi(y) \vert &= \left| \frac{\pi(y)}{\pi(S)} \P_{S_{0} = S}[y \in S_{t}] - \pi(y) \right| \\
&= \frac{\pi(y)}{\pi(S)} \left| \P_{S_{0} = S}[y \in S_{t}] - \pi(S) \right|. \\
\end{split}
\ee
Let $\tau = \min \{ t \geq 0 \, : \, M_{t} = \emptyset \}$. It follows immediately from inequality \eqref{MainNullnessBound} that $\P[\tau < \infty] = 1$. Since $\{ \pi(S_{t})\}_{t}$ is a bounded martingale by Lemma 17.13 of \cite{LPW09}, the optional stopping theorem implies $\pi(S) = \P_{S_{0} = S}[S_{\tau} = \Omega]$. Thus, we can continue the calculation in equation \eqref{EqToBeContinuedEvoSet} by writing

\be
\vert K^{t}(S,y) - \pi(y) \vert &=  \frac{\pi(y)}{\pi(S)} \vert \P_{S_{0} = S}[y \in S_{t}] - \P_{S_{0} = S}[S_{\tau} = \Omega] \vert \\
&\leq \frac{\pi(y)}{\pi(S)} \Big( \vert \P_{S_{0} = S}[y \in S_{t}, \tau > t] - \P_{S_{0} = S}[S_{\tau} = \Omega, \tau > t] \vert \\
& \qquad \qquad + \vert \P_{S_{0} = S}[y \in S_{t}, \tau \leq t] - \P_{S_{0} = S}[S_{\tau} = \Omega, \tau \leq t] \vert \Big)\\
&\leq \frac{\pi(y)}{\pi(S)} \P_{S_{0} = S}[\tau > t] + 0,
\ee
where the last equality uses the property that $\emptyset$ is an absorbing state for the evolving set Markov chain, and so 
\be 
\{ \tau \leq t,  y \in S_t \}  = \{\tau \leq t, S_{\tau} \neq \emptyset\} 
= \{\tau \leq t, S_{\tau} = \Omega\}.
\ee 
Combining this with inequality \eqref{MainNullnessBound} and the definition of $A$ given above, 
for all $0 < C < \infty$ we have 
\be \label{SepBound}
\left| K^{ \lceil \frac{2 C}{ \gamma} \rceil \left( A + 1 \right)}(S,y) - \pi(y) \right|  \leq \frac{e^{-C}\, \pi(y)}{\pi(S)}.
\ee

Finally, we must relate $\frac{\vert K^{t}(S,y) - \pi(y) \vert } {\pi(y)}$ to $\max_{x,y} \frac{\vert K^{t}(x,y) - \pi(y) \vert } {\pi(y)}$. Using the decomposition of $K$ in terms of the weights $\alpha_{x}$ and $K_{x,U}$ from equation \eqref{EqFinChainKRep}, the definition in equation \eqref{def-kts}, as well as the decomposition \eqref{def-Kt} for  $K^{t+1}$, we note that 
\begin{align*}
\vert K^{t+1}(x,y) - \pi(y) \vert  &= |  (1 - \alpha_{x}) \sum_{U \subset \Omega} K_{x,U} \, K^{t}(U,y) + \alpha_{x} K^{t}(x,y) - \pi(y) \, ((1 - \alpha_{x}) \sum_{U \subset \Omega}  K_{x,U} + \alpha_{x} ) | \\
&= | (1 - \alpha_{x}) \sum_{U \subset \Omega} (K_{x,U} K^{t}(U,y) - \pi(y)) + \alpha_{x} (K^{t}(x,y) - \pi(y)) | \\
&\leq (1 - \alpha_{x})\sum_{U \subset \Omega} K_{x,U} \vert K^{t}(U,y) - \pi(y) \vert + \alpha_{x}. 
\end{align*}
Together, the previous two inequalities, the bound  $\alpha_x \geq \frac{1}{2}$  and equality \eqref{PiStarDef} show that

\be
\frac{1}{2} \sum_{y \in \Omega} \left| K^{ \lceil \frac{2 C}{ \gamma} \rceil \left( A + 1 \right) + 1}(x,y) - \pi(y) \right| & \leq \alpha_x + \sum_{y \in \Omega} \frac{e^{-C} \pi(y)}{2} \sum_{U \subset \Omega} \frac{K_{x,U}}{\pi(U)}  \\
&\leq \alpha_{x} + \frac{1}{2} \frac{e^{-C}}{\pi_{\ast}}.
\ee
Thus, for any $s \in \mathbb{N}$ and $T > \lceil \frac{2 }{ \gamma} \log \left( \frac{4}{\pi_{\ast}} \right) \rceil \left( A + 1 \right) + 1 + s$, we have by the same calculation
\begin{align*}
\vert \vert \mathcal{L}(X_{T}) - \pi \vert \vert_{TV} &\leq \frac{1}{2} \max_{x} \sum_{y \in \Omega}  \vert K^{T}(x,y) - \pi(y) \vert \\
&\leq \frac{1}{2} \max_{x} \left( \sum_{y \in \Omega}  \vert \alpha_{x} (K^{T-1}(x,y) - \pi(y)) + (1 - \alpha_{x}) (\sum_{U \subset \Omega} K_{x,U} K^{T-1}(U,y) - \pi(y)) \vert \right) \\
&\leq \frac{1}{2} \max_{x} \left( \alpha_{x} \sum_{y \in \Omega} |K^{T-1}(x,y) - \pi(y)| + (1 -\alpha_{x}) \sum_{U \subset \Omega} K_{x,U} |K^{T-s-1}(U,y) - \pi(y) | \right) \\
&\leq \ldots  \\
&\leq \max_{x \in \Omega} \alpha_{x}^{s} + \frac{1}{2} \max_{x} \sum_{U \subset \Omega} K_{x,U} |K^{T-s-1}(U,y) - \pi(y) | \\
&\leq \frac{1}{8} + \max_{x \in \Omega} \alpha_{x}^{s} . 
\end{align*}
We note that $\Phi \leq 1 - \max_{x \in \Omega} K(x,x) \leq 1 - \max_{x \in \Omega} K_{x, \{x\}}$, and so $K_{x,\{x\}}^{s} \leq \frac{1}{8}$ for $s > \frac{ - \log(8)}{\log (1 - \Phi)}$. Thus, for $T > \lceil \frac{2 }{ \gamma} \log \left( \frac{4}{\pi_{\ast}} \right) \rceil \left(\lceil \frac{ \log ( \frac{\sqrt{\beta}}{2}) }{\log(1-\frac{\Phi^{2}}{2})} \rceil + 1 \right) + 1 + \lceil \frac{ - \log(8)}{\log (1 - \Phi)} \rceil$, we have

\be
\vert \vert \mathcal{L}(X_{T}) - \pi \vert \vert_{TV} \leq  \frac{1}{4}.
\ee
Taking a Taylor expansion of the  $\log(1 - \Phi)$ term and using the assumption $\Phi \leq \frac{1}{4}$ completes the proof of the upper bound in inequality \eqref{EqEvoSetThmRes}. 

The lower bound is an immediate consequence of Theorem 7.3 of \cite{LPW09}.
\end{proof}

\section{Best Lifting Improvement Bound} \label{SecLiftingBound}

In this section, we state and prove Theorem \ref{LiftImpBd3}, which establishes 
a bound related to Theorem \ref{LiftImpBd2}, but under weaker assuptions.   In Section \ref{sec-last}, we 
derive Theorem \ref{LiftImpBd2} from Theorem \ref{LiftImpBd3}.

\subsection{A Generalization of Theorem \ref{LiftImpBd2}} \label{SeubsecGenThmPf}

We state some notation that will be used throughout Section \ref{SecLiftingBound}. Let $K$ be the transition kernel of a Markov chain on a compact subset $\Omega \subset \prod_{i=1}^{d} [A_{i}, B_{i}] \subset \mathbb{R}^{d}$ or $\Omega \subset \mathbb{T}^{d}$. Let $\pi$ be the associated stationary distribution and assume that $\pi$ is absolutely continuous, with some density $\rho$. For all $N \in \mathbb{N}$, define
\be 
\Omega^{(N)} = \{ N^{-1}(x_{1}, x_{2},\ldots, x_{d})  \, : \, N A_{i} - 2 \leq x_{i} \leq N B_{i} + 2, \, \, x_{1}, \ldots,x_{d} \in \mathbb{Z} \},
\ee 
and for all $x = (x_{1},\ldots,x_{d}) \in \Omega^{(N)}$, define
\be \label{EqDefSomeArbitraryDiscretization}
\pi^{(N)}(\{x\}) = \int_{x_{1}- \frac{1}{2N}}^{x_{1} + \frac{1}{2N}} \ldots \int_{x_{d}- \frac{1}{2N}}^{x_{d} + \frac{1}{2N}} \rho(y_{1},\ldots,y_{d}) dy_{1} \ldots dy_{d}.
\ee 
By a standard small abuse of notation, for $x \in \Omega^{(N)}$ we define $\pi^{(N)}(x) \equiv \pi^{(N)}(\{x\})$. We make similar small abuses of notation for the other measures and kernels defined on discrete state spaces.

Since the convex hull of $\Omega^{(N)}$ contains $\Omega$ and $\pi$ has no atoms, $\sum_{x \in \Omega^{(N)}} \pi^{(N)}(x) = 1$ and so $\pi^{(N)}$ is in fact a probability measure on $\Omega^{(N)}$. 
Assume that $K$ satisfies condition 1 of Assumptions \ref{AsmpSimpleIntroThm}  and hence, can be written in the form given by equation \eqref{EqKernelRepresentation} with associated  weights $\alpha_{x}$, $K_{x,U}$, $U \in {\mathcal U}_x, x \in \Omega$. 
We now define an associated kernel $L^{(N)}$ on $\Omega^{(N)}$ by setting 
\be 
\label{LN}
L^{(N)}(x,y) = \alpha_{x} \delta_{x}(y) + (1 - \alpha_{x}) \sum_{U \in \mathcal{U}_{x}, \, U \ni y} K_{x,U} \frac{\pi^{(N)}(y)}{\pi^{(N)}(U)}
\ee 
for all $x,y \in \Omega^{(N)}$. We define $K^{(N)}$ to be the Metropolis-Hastings kernel associated with the ``proposal" kernel $L^{(N)}$ and target distribution $\pi^{(N)}$. For $ x \neq y$, this kernel is given by $K^{(N)}(x,y) = L^{(N)}(x,y) \propN(x,y)$, where
\be 
\label{lambdaN}
\propN(x,y) =  \min \left(1, \frac{\pi^{(N)}(y) L^{(N)}(y,x)}{\pi^{(N)}(x) L^{(N)}(x,y)} \right).
\ee 
Otherwise, $K^{(N)}(x,x) = 1 - \sum_{y \in \Omega^{(N)}} K^{(N)}(x,y)$. Analogously to the definitions of $Q(x,y)$ and $Q(A,B)$ in equation \eqref{EqDefQCombinedChain}, define 
\be 
Q^{(N)}(x,y) = \pi^{(N)}(x) K^{(N)}(x,y)
\ee for $x,y \in \Omega^{(N)}$ and 
\be \label{EqNotConvEqLast}
Q^{(N)}(A,B) = \sum_{x \in A, y \in B} Q^{(N)}(x,y) \ee for $A,B \subset \Omega^{(N)}$. Throughout the remainder of the paper, we also abuse notation slightly and define $\pi^{(N)}(S) = \pi^{(N)}(S \cap \Omega^{(N)})$. We extend this slight abuse of notation to all other quantities that depend on subsets of $\Omega$ with size strictly greater than 1. For example, we define $Q^{(N)}(A,B) = Q^{(N)}(A \cap \Omega^{(N)},B \cap \Omega^{(N)})$ when $A, B \in \mathcal{A}$ and $A$, $B$ are not singletons. We will \textit{not} abuse notation in this way for single elements.

For $S \subset \Omega^{(N)}$, define the covering map $\mathcal{P}^{(N)} \, : \, 2^{\Omega^{(N)}} \mapsto 2^{\mathbb{R}^{d}}$, which sends a subset $S \subset \Omega^{(N)}$ to unions of rectangles that cover $S$, as follows:
\be \label{EqDefProjectionLikeOperator}
\mathcal{P}^{(N)}(S) = \bigcup_{x \in S} \prod_{i=1}^{d} \left[x_{i} - \frac{1}{2N}, x_{i} + \frac{1}{2N}\right] \subset \mathbb{R}^{d}.
\ee 
As has been our convention, we slightly abuse notation and define $\mathcal{P}^{(N)}(x) = \mathcal{P}^{(N)}(\{x\})$ for single elements $x \in \Omega^{(N)}$.

We also require some additional general notation. We define the collection  $\mathcal{B}$ of subsets that occur in our representation of $K$ as follows:
\be
\mathcal{B} :=  \bigcup_{x \in \Omega} \left\{ U \in  {\mathcal U}_x:  K_{x,U} > 0 \right\}. 
\ee 
For $\epsilon > 0$, we define the collection $\mathcal{R}_{\epsilon}$ of large rectangular subsets
\be 
\mathcal{R}_{\epsilon} := \left\{ \prod_{i=1}^{d} [a_{i}, b_{i}]  \subset \mathbb{R}^{d} \, : \, \inf_{i} (b_{i} - a_{i}) \geq \epsilon \right\}, 
\ee
and for $\epsilon > 0$ and $m \in \mathbb{N} \cup \{ \infty \}$, we define the  collection $\mathcal{H}_{\epsilon,m}$ of sets that are generated by a small number of large rectangular subsets
\be 
\mathcal{H}_{\epsilon,m} &:= \left\{ \cup_{j=1}^{k} R_{j}, (\cup_{j=1}^{k} R_{j})^{c} \, \, : \, 1 \leq k \leq m, \, R_{j} \, \mathrm{or} \, R_{j}^{c} \in \mathcal{R}_{\epsilon} \right\}. 
\ee

We write some necessary assumptions:

\begin{assumptions} \label{AssumptionsRegularityConditions}
Let $K$ be the reversible transition kernel of a Markov chain on a compact subset $\Omega \subset \mathbb{R}^{d}$, and let $\pi$ be the associated stationary distribution. Assume that
\begin{enumerate}
\item Parts \textbf{1} through \textbf{3} of Assumptions \ref{AsmpSimpleIntroThm} hold, with associated quantities $\alpha_x$, ${\mathcal U}_x$, $K_{x, U}$, $x \in \Omega$, ${\mathcal U}_x \subset {\mathcal A}$, $U \in {\mathcal U}_x$. 
\item For all fixed $0 < \epsilon < 1$, $m \in \mathbb{N}$ and $i \in \{1,2,3,4\}$, 
\be \label{EqRegCond1}
\lim_{N \rightarrow \infty} \theta^{(i)}(\epsilon,m,N) = 0,
\ee 
where 
\be
\theta^{(1)}(\epsilon,m,N)&= \sup_{S \in \mathcal{H}_{\epsilon,m}} | \pi^{(N)}(S) - \pi(S) |,  \\
\theta^{(2)}(\epsilon,m,N)&= \sup_{S  \in \mathcal{B}} | \pi^{(N)}(S) - \pi(S) |,  \\
\theta^{(3)}(\epsilon,m,N)&=   \sup_{S  \in \mathcal{B}} d_{\mathrm{Prok}}(\pi^{(N)}|_{S}, \pi|_{S}), \\
\ee 
and $\theta^{(4)}$ is given by
\be 
\theta^{(4)}(\epsilon,m,N) = \sup_{S \subset \Omega^{(N)}} \frac{\tilde{\theta}^{(4)}(S)}{\pi(\mathcal{P}^{(N)}(S))}, 
\ee 
with  
\be 
\tilde{\theta}^{(4)}(S) := & \left|\sum_{x \in S, y \notin S} (1-\alpha_{x}) \pi^{(N)}(x) \pi^{(N)}(y) \sum_{U \in \mathcal{U}_{x}, U \ni  y } K_{x,U} \frac{1}{\pi(U)} \right. \\
& \quad \left. - \int_{\mathcal{P}^{(N)}(S)}  (1-\alpha_{x}) \sum_{U \in \mathcal{U}_{x}} K_{x,U} \frac{\pi(U \cap \mathcal{P}^{(N)}(S^{c}))}{\pi(U)} \rho(x) dx  \right|,
\ee 
recalling the notational convention stated immediately following equation \eqref{EqNotConvEqLast}.
\end{enumerate}
\end{assumptions}

Our generalization of Theorem \ref{LiftImpBd2} is: 

\begin{thm} [Bound on Improvement Due to Lifting] \label{LiftImpBd3} 
Let $K$ be a reversible transition kernel  that satisfies Assumptions \ref{AssumptionsRegularityConditions} with associated constants $\gamma$ and $\beta$, and has mixing time $\tau$.  Then, for any lift $\widehat{K}$ of $K$  the mixing time $\widehat{\tau}$ of $\widehat{K}$ 
satisfies 
\be
\widehat{\tau}  \geq  & \frac{\sqrt{\gamma}}{32 \sqrt{  \log \left( \frac{4}{\pi_{\ast}} \right) \log \left(\frac{\sqrt{\beta}}{2} \right) }} \sqrt{\tau}, 
\ee
where $\pi_*$ is the constant defined in \eqref{PiStarDef}. 
\end{thm}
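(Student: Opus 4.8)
The plan is to derive Theorem \ref{LiftImpBd3} from Theorem \ref{EvSetBd1} by a discretization argument, transferring the continuous-state bound to the discrete chains $K^{(N)}$ and then passing to the limit. First I would recall the classical half of the Chen--Lov\'asz--Pak argument for lifts: if $\widehat K$ is a lift of $K$, then the conductance $\widehat\Phi$ of $\widehat K$ and the conductance $\Phi$ of $K$ satisfy $\widehat\Phi \le 2\sqrt{\Phi}$ (the "lifted flow cannot beat the square root of the base flow" estimate), and combining this with the trivial lower bound $\widehat\tau \ge \frac{1}{4\widehat\Phi}$ from Theorem 7.3 of \cite{LPW09} gives $\widehat\tau \ge \frac{1}{8\sqrt\Phi}$. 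So the whole game is to produce an \emph{upper} bound on $\Phi$ in terms of $\tau$; squaring $\widehat\tau \ge \frac{1}{8\sqrt{\Phi}}$ against $\Phi \le O(\tau^{-1}\log(\cdots))$ yields the stated inequality with the constants $\gamma,\beta,\pi_\ast$ appearing under the square root exactly as in Theorem \ref{EvSetBd1}. The point of Theorem \ref{EvSetBd1} is precisely that it delivers $\tau \le \frac{32}{\gamma\Phi^2}\log(4/\pi_\ast)\log(\sqrt\beta/2)$, i.e.\ $\Phi \le \sqrt{\frac{32}{\gamma\tau}\log(4/\pi_\ast)\log(\sqrt\beta/2)}$, for \emph{finite-state} chains satisfying Assumption \ref{AssumptionsEvoSet}.

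The technical heart is therefore the discretization. For each $N$ I would work with the Metropolis--Hastings chain $K^{(N)}$ on $\Omega^{(N)}$ with proposal $L^{(N)}$ and target $\pi^{(N)}$, and establish three things. (i) \textbf{The discrete chain inherits the hypotheses.} Using Assumptions \ref{AsmpSimpleIntroThm}(1)--(3) and the regularity condition \eqref{EqRegCond1}, show that for $N$ large $K^{(N)}$ is (close to) $\tfrac12$-lazy, reversible w.r.t.\ $\pi^{(N)}$, and satisfies Assumption \ref{AssumptionsEvoSet} with the same $\beta,\gamma$ (up to an $o(1)$ loss that can be absorbed). The minorization-type bound \eqref{EqEpsDelt1} for $K^{(N)}$ is exactly what $\theta^{(4)}$, $\theta^{(1)}$ and \eqref{IneqAssEpsDeltFirstApp} are engineered to guarantee: $\theta^{(4)}\to 0$ lets us compare the discrete ergodic flow $Q^{(N)}(S,y)$ out of small sets with the continuous integral $\int_{\mathcal P^{(N)}(S)}(1-\alpha_x)\sum_U K_{x,U}\frac{\pi(U\cap\mathcal P^{(N)}(S^c))}{\pi(U)}\rho(x)\,dx$, which \eqref{IneqAssEpsDeltFirstApp} controls. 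Also $\pi^{(N)}_\ast \to \pi_\ast$ (or at least $\ge \pi_\ast - o(1)$) using $\theta^{(2)}$, since the atoms in the representation \eqref{LN} are the discretizations of the sets $U\in\mathcal B$. (ii) \textbf{Conductances converge.} Show $\Phi^{(N)} \to \Phi$, or at least $\limsup_N \Phi^{(N)} \le \Phi$; a minimizing set for $\Phi$ can be fattened slightly to lie in some $\mathcal H_{\epsilon,m}$ and then discretized, with the flow and mass errors controlled by $\theta^{(1)},\theta^{(4)}$. (iii) \textbf{Mixing times converge.} Show $\tau^{(N)} \to \tau$; this is the usual sandwich argument — $K^{(N)}$ is built so that its one-step transitions approximate $K$'s (via \eqref{IneqAssKernelCont} and the Prokhorov estimates $\theta^{(3)}$), hence $\mathcal L(X_t^{(N)})$ tracks $\mathcal L(X_t)$ in total variation / Prokhorov uniformly over bounded $t$, and the lazy-chain continuity of the mixing profile lets one conclude $\tau^{(N)} \to \tau$.

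Once (i)--(iii) are in place, apply Theorem \ref{EvSetBd1} to $K^{(N)}$: for $N$ large enough that $\Phi^{(N)} \le \tfrac14$, we get $\Phi^{(N)} \le \sqrt{\frac{32}{\gamma\tau^{(N)}}\log\!\big(\tfrac{4}{\pi^{(N)}_\ast}\big)\log\!\big(\tfrac{\sqrt\beta}{2}\big)}$. Letting $N\to\infty$ and using (ii), (iii), $\pi_\ast^{(N)}\to\pi_\ast$ yields $\Phi \le \sqrt{\frac{32}{\gamma\tau}\log(4/\pi_\ast)\log(\sqrt\beta/2)}$. Feeding this into $\widehat\tau \ge \frac{1}{8\sqrt\Phi}$ and simplifying the constant ($\sqrt{1/(8\cdot\sqrt{32})} = 1/\sqrt{8\sqrt{32}}$, etc., which is $\ge \tfrac{1}{32}$) gives
\be
\widehat\tau \;\ge\; \frac{\sqrt\gamma}{32\sqrt{\log(4/\pi_\ast)\,\log(\sqrt\beta/2)}}\,\sqrt\tau,
\ee
as claimed. (Strictly, one should check $\Phi>0$ so these manipulations are legitimate; if $\Phi=0$ the chain is reducible and $\tau=\infty$, making the statement vacuous, and if $\Phi > \tfrac14$ the trivial bound $\widehat\tau\ge 1$ together with $\tau = O(1)$ covers it.)

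\textbf{Main obstacle.} The genuinely delicate step is (i), specifically verifying the discrete minorization condition \eqref{EqEpsDelt1} for $K^{(N)}$ uniformly in $N$ and over \emph{all} small sets $S$ — not just nice ones. This is where Assumptions \ref{AsmpSimpleIntroThm}(4)--(5) (density bounds, and the $U$'s being unions of $\le q$ hypercubes of side $\ge\psi$) enter in the weak version, and where $\theta^{(4)}$ does the work in the general version: one must control, uniformly over arbitrary subsets $S$ of the fine grid, the discrepancy between a sum over grid points in $S$ weighted by $\pi^{(N)}(y)/\pi^{(N)}(U)$ and the corresponding Lebesgue integral, and crucially the normalizing denominator in $\theta^{(4)}$ is $\pi(\mathcal P^{(N)}(S))$ rather than $\pi^{(N)}(S)$, so boundary effects of $S$ (how much $\mathcal P^{(N)}(S)$ can exceed the "true" region) have to be tamed — this is exactly why a regularity hypothesis on the boundaries of the sets $U$ is unavoidable. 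The Metropolis--Hastings correction factor $\propN(x,y)$ also has to be shown to be $\ge 1 - o(1)$ on the relevant pairs, which again reduces to $\pi^{(N)}$-vs-$\pi$ comparisons; this is routine given \eqref{IneqDensityBlah} but must be done with care since it multiplies the flow.
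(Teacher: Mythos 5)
Your overall architecture --- discretize, verify that each $K^{(N)}$ satisfies Assumption \ref{AssumptionsEvoSet}, apply Theorem \ref{EvSetBd1} to get $\Phi^{(N)} \leq \sqrt{\tfrac{32}{\gamma^{(N)}\tau^{(N)}}\log(4/\pi_\ast^{(N)})\log(\sqrt{\beta^{(N)}}/2)}$, and pass to the limit using the convergence of $\Phi^{(N)}$, $\pi_\ast^{(N)}$, $\beta^{(N)}$, $\gamma^{(N)}$ and a lower bound on $\liminf_N \tau^{(N)}$ --- is exactly the paper's route (Proposition \ref{PropConvDiscChain}), and your identification of the uniform minorization over arbitrary small grid sets as the delicate point is apt. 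However, there is a concrete error in the very first step, and it breaks the exponent in the final bound. You assert $\widehat{\Phi} \leq 2\sqrt{\Phi}$ and deduce $\widehat{\tau} \geq \tfrac{1}{8\sqrt{\Phi}}$. Feeding $\Phi \leq \sqrt{C/\tau}$ (which is what Theorem \ref{EvSetBd1} actually yields, since it gives $\tau \leq C/\Phi^2$) into $\widehat{\tau} \geq \tfrac{1}{8}\Phi^{-1/2}$ produces only $\widehat{\tau} \geq \tfrac{1}{8}(\tau/C)^{1/4}$, i.e.\ a fourth-root bound, not the claimed $\sqrt{\tau}$ bound; your parenthetical ``squaring against $\Phi \leq O(\tau^{-1}\log(\cdots))$'' silently replaces $\Phi \leq \sqrt{C/\tau}$ by the much stronger and unjustified $\Phi \leq C/\tau$. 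The fact you actually need, and the one proved in Section 3 of \cite{CLP99} and used by the paper, is $\widehat{\Phi} \leq \Phi$ with no square root: if $S$ nearly attains $\Phi$, then $\widehat{S} = f^{-1}(S)$ satisfies $\widehat{\pi}(\widehat{S}) = \pi(S)$ and $\widehat{Q}(\widehat{S},\widehat{S}^c) = Q(S,S^c)$ by the lifting identity of Remark \ref{rem-lift}. This gives $\widehat{\tau} \geq \tfrac{1}{4\widehat{\Phi}} \geq \tfrac{1}{4\Phi} \geq \tfrac{1}{4}\sqrt{\gamma\tau/(32\log(4/\pi_\ast)\log(\sqrt{\beta}/2))}$, which is the stated inequality (with constant $\tfrac{1}{16\sqrt{2}} \geq \tfrac{1}{32}$).

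Two smaller remarks. First, in step (iii) you should not aim to prove $\tau^{(N)} \to \tau$; only $\liminf_{N\to\infty}\tau^{(N)} \geq \tau$ is needed for the limit passage (the bound from Theorem \ref{EvSetBd1} is monotone in $\tau^{(N)}$ in the right direction), and that one-sided inequality is what the Prokhorov coupling argument of Lemma \ref{LemmaUnifProkApprox} combined with the approximation of measurable sets by elements of $\mathcal{H}_{\epsilon,m}$ actually delivers; full convergence of mixing times under weak approximation of kernels is a substantially harder claim. Second, your handling of the case $\Phi^{(N)} > \tfrac14$ is a reasonable precaution that the paper elides, but once the lift step is corrected it is only needed to ensure Theorem \ref{EvSetBd1} is applicable, not to rescue the exponent.
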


The main steps in the proof of Theorem \ref{LiftImpBd3} are the approximation result in Proposition \ref{PropConvDiscChain} below, which relates the kernel $K$ of interest to the sequence of discretized versions $K^{(N)}$ defined above, and a repeated application of the improved evolving set bound in Theorem \ref{EvSetBd1} to the discretized versions of the chain.  The approximation 
result is likely to be of use in more general contexts. 
Denote by $\tau$ and $\tau^{(N)}$  the mixing times of the transition kernels $K$ and $K^{(N)}$, respectively. Similarly, let $\Phi$ and $\Phi^{(N)}$ be the conductances of the transition kernels $K$ and $K^{(N)}$, respectively.  Also, define 
\be \label{def-instar}
\pi^{(N)}_{\ast} = \inf_{S \in \mathcal{B}} \pi^{(N)}(S).
\ee 
Then we have:

\begin{prop} \label{PropConvDiscChain}
Under the conditions of Theorem \ref{LiftImpBd3},
\be \label{lim-taun}
\mathrm{liminf}_{N \rightarrow \infty} \tau^{(N)} \geq   \tau   
\ee 
and
\be \label{lim-phin}
\lim_{N \rightarrow \infty} \Phi^{(N)} = \Phi.
\ee
Recall the constants $\beta, \gamma$ in inequality \eqref{EqEpsDelt1}. There exist $\gamma^{(N)}, \beta^{(N)}$ such that $Q^{(N)}$ satisfies inequality \eqref{EqEpsDelt1} with constants $\gamma^{(N)}$ and $\beta^{(N)}$ in place of $\gamma$ and $\beta$, respectively, and such that 

\be \label{EqLimitingBetaGammeDisc}
\lim_{N \rightarrow \infty} \beta^{(N)} &= \beta, \\
\lim_{N \rightarrow \infty} \gamma^{(N)} &= \gamma. 
\ee 
\end{prop}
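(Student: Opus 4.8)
The plan is to prove the three assertions of Proposition~\ref{PropConvDiscChain} in order, with the convergence of conductances \eqref{lim-phin} being the technical heart of the argument and the source of essentially all the real work.

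\textbf{Step 1: liminf of the mixing times, \eqref{lim-taun}.} First I would observe that the discretized chain $K^{(N)}$, viewed through the covering map $\mathcal{P}^{(N)}$, becomes a good approximation to $K$ as $N \to \infty$: the atoms $\pi^{(N)}(x)$ are integrals of $\rho$ over shrinking cubes, and by condition~\textbf{3} of Assumptions~\ref{AsmpSimpleIntroThm} together with the regularity conditions \eqref{EqRegCond1} (in particular $\theta^{(1)}, \theta^{(2)}, \theta^{(3)} \to 0$), the one-step laws $L^{(N)}(x,\cdot)$ and hence $K^{(N)}(x,\cdot)$ converge to $K(x,\cdot)$ in an appropriate weak/Prokhorov sense, uniformly over the relevant sets. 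I would then argue the contrapositive: if $\liminf_N \tau^{(N)} < \tau$, then along a subsequence there is a fixed time $t$ at which $K^{(N)}$ is $(1/4)$-mixed but $K$ is not; pushing the $K^{(N)}$-chain forward under $\mathcal{P}^{(N)}$, taking a weak limit of the worst-case starting states, and using lower semicontinuity of total variation distance under the discretization-to-continuum limit, one obtains that $K$ mixes in time $t$ as well, a contradiction. The compactness of $\Omega$ is what makes the selection of limiting starting points possible.

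\textbf{Step 2: convergence of conductances, \eqref{lim-phin}.} This is the main obstacle. The difficulty is that $\Phi$ and $\Phi^{(N)}$ are infima of ergodic-flow ratios over \emph{all} measurable sets $S$, and a priori the optimizing sets for $K^{(N)}$ need not correspond to nice subsets of $\Omega$. The strategy is two one-sided bounds. For $\limsup_N \Phi^{(N)} \le \Phi$: given a near-optimal set $S \subset \Omega$ for $\Phi$, I would discretize it (take $S^{(N)} := \{x \in \Omega^{(N)} : \mathcal{P}^{(N)}(x) \text{ meets } S\}$ or similar) and use $\theta^{(1)}$, $\theta^{(2)}$, and crucially $\theta^{(4)}$ --- which is precisely designed to control the error between the discrete ergodic flow $Q^{(N)}(S^{(N)}, (S^{(N)})^c)$ and the continuum flow $\int_{\mathcal{P}^{(N)}(S^{(N)})} (1-\alpha_x)\sum_U K_{x,U} \frac{\pi(U \cap \mathcal{P}^{(N)}((S^{(N)})^c))}{\pi(U)}\rho(x)\,dx$ --- to conclude $\Phi^{(N)}(S^{(N)}) \to \Phi(S)$ (the $\delta_x$ part of the kernel contributes nothing to the flow out of a set and can be ignored). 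Here one also needs that the Metropolis--Hastings acceptance ratio $\propN(x,y) \to 1$ as $N \to \infty$ for the relevant transitions, which follows because $L^{(N)}$ is becoming reversible with respect to $\pi^{(N)}$ in the limit (using reversibility of $K$ with respect to $\pi$ and the density bounds \eqref{IneqDensityBlah}). For the reverse inequality $\liminf_N \Phi^{(N)} \ge \Phi$: take sets $S^{(N)} \subset \Omega^{(N)}$ nearly optimal for $\Phi^{(N)}$, form $\mathcal{P}^{(N)}(S^{(N)}) \subset \mathbb{R}^d$, and extract a limit set $S$ (e.g.\ an $L^1(\pi)$-limit of the indicator functions, using compactness in $L^1$ and a diagonal argument); then $\theta^{(4)}$ again bridges the discrete flow and the continuum flow of the limit, and $\theta^{(1)}$ handles the denominators, yielding $\Phi(S) \le \liminf_N \Phi^{(N)}$. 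Some care is needed with the constraint $\pi(S) \le \tfrac12$: a small perturbation argument (or noting $\Phi(S) = \Phi(S^c)$ up to the $\pi(S)/\pi(S^c)$ factor and the density bounds) handles sets near the $\tfrac12$ threshold.

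\textbf{Step 3: the constants $\beta^{(N)}, \gamma^{(N)}$, \eqref{EqLimitingBetaGammeDisc}.} For the minorization-type inequality \eqref{EqEpsDelt1}, fix $x,y \in \Omega^{(N)}$ and a set $S$ with small $\pi^{(N)}(S)$. The flow ratio $Q^{(N)}(S,y)/\pi^{(N)}(y) = \sum_{x \in S} K^{(N)}(x,y) \pi^{(N)}(x)/\pi^{(N)}(y)$ is, by the explicit form \eqref{LN} of $L^{(N)}$ and the boundedness of $\propN \le 1$, bounded above by $\sum_{x \in S}(1-\alpha_x)\sum_{U \ni y} K_{x,U} \frac{\pi^{(N)}(x)}{\pi^{(N)}(U)} + \big(\sup_{x\in S}\alpha_x\big)\mathbf{1}_{y \in S}$; comparing this to the continuum quantity on the left of \eqref{IneqAssEpsDeltFirstApp} using $\theta^{(2)}$ to replace $\pi^{(N)}(U)$ by $\pi(U)$ and $\theta^{(1)}$-type estimates to replace the discrete sum over $x \in S$ by $\int_{\mathcal{P}^{(N)}(S)}$, one gets that $Q^{(N)}(S,y)/\pi^{(N)}(y) \le 1 - \gamma + o(1)$ whenever $\pi^{(N)}(S) \le \beta - o(1)$, where the $o(1)$ terms are uniform and vanish as $N \to \infty$. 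Setting $\beta^{(N)} := \beta - o(1)$ and $\gamma^{(N)} := \gamma - o(1)$ accordingly (with the explicit error expressed in terms of $\theta^{(1)}, \theta^{(2)}$ and the modulus of continuity in condition~\textbf{3}) gives \eqref{EqEpsDelt1} for $Q^{(N)}$ and the stated limits. I expect Step~2, and within it the verification that $\theta^{(4)}$ genuinely controls the flow discrepancy for the particular sets arising as discretizations or limits of near-optimizers, to be where the bulk of the technical effort lies; Steps~1 and~3 are more routine once the approximation framework is set up.
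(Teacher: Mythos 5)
Your overall strategy coincides with the paper's: a trajectory-level Prokhorov coupling to get \eqref{lim-taun}, two one-sided conductance estimates driven by $\theta^{(4)}$ (together with the acceptance-ratio convergence $\propN(x,y)\to 1$) to get \eqref{lim-phin}, and a direct comparison of $Q^{(N)}(S,y)/\pi^{(N)}(y)$ with the continuum bound \eqref{IneqAssEpsDeltFirstApp} for the constants $\beta^{(N)},\gamma^{(N)}$. Steps 1 and 3 are essentially what the paper does; the only cosmetic difference in Step 1 is that the paper works in the direction ``$K$ unmixed at time $T$ implies $K^{(N)}$ unmixed at time $T$'': one fixes a bad starting point and a witness set for $K$, approximates the set by a finite union of rectangles (Lemma \ref{LemmaRegularApprox}) and controls its thickenings, so no extraction of limiting worst-case discrete starting states is required.

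The one genuine problem is in your argument for $\liminf_{N}\Phi^{(N)}\geq\Phi$. Indicator functions of measurable sets are \emph{not} relatively compact in $L^{1}(\pi)$: Rademacher-type sets give indicators that converge weak-$*$ in $L^{\infty}$ to the constant $\tfrac12$ and admit no $L^{1}$-convergent subsequence, so ``extract a limit set $S$ as an $L^{1}$-limit of $\mathbf{1}_{\mathcal{P}^{(N)}(S^{(N)})}$'' is not available, and a weak-$*$ limit need not be an indicator nor make the conductance functional pass to the limit. Fortunately no limit set is needed. Since $\theta^{(4)}(\epsilon,m,N)$ is by definition a supremum over \emph{all} $S\subset\Omega^{(N)}$, it yields $\Phi^{(N)}(S^{(N)})\geq\Phi(\mathcal{P}^{(N)}(S^{(N)}))-\epsilon$ uniformly in $S^{(N)}$ once $N$ is large; and because the cubes tile, $\pi(\mathcal{P}^{(N)}(S^{(N)}))=\pi^{(N)}(S^{(N)})\leq\tfrac12$ exactly, so $\mathcal{P}^{(N)}(S^{(N)})$ is itself admissible in the infimum defining $\Phi$ and one concludes $\Phi^{(N)}(S^{(N)})\geq\Phi-\epsilon$ directly. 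This uniform-in-$S^{(N)}$ comparison is how the paper argues, and it also disposes of your separate worry about sets near the $\pi(S)=\tfrac12$ threshold.
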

We defer the proof of this proposition to Section \ref{SecProofProp}, and instead, first show how it implies Theorem \ref{LiftImpBd3}. 

\begin{proof}[Proof of Theorem \ref{LiftImpBd3}]
We first recall two results. From Section 3 of \cite{CLP99}\footnote{This proof is given for Markov chains on discrete state spaces, but the same bound holds for continuous state Markov chains with minor changes in notation}, the conductance $\Phi$ of a kernel is related to the conductance $\widehat{\Phi}$ of its lift by 
\be 
\widehat{\Phi} \leq \Phi.
\ee 
By Theorem 7.3 of \cite{LPW09}\footnote{Again, the result is stated for chains on discrete state spaces but applies more generally to continuous state Markov chains.}, the conductance gives a lower bound on the mixing time of a chain:
\be 
\widehat{\tau} \geq \frac{1}{4 \widehat{\Phi}}, \qquad \tau \geq \frac{1}{4 \Phi}.
\ee 
Using these two bounds, relations \eqref{lim-phin} and \eqref{EqLimitingBetaGammeDisc}  from Proposition \ref{PropConvDiscChain}, Theorem \ref{EvSetBd1}, and the definition of $\pi^{(N)}_{\ast}$ in equation \eqref{def-instar}, we calculate 

\be \label{IneqMainResult}
\begin{split}
\widehat{\tau} \geq \frac{1}{4 \widehat{\Phi}} 
&\geq \frac{1}{4 \Phi} \\
&= \lim_{N \rightarrow \infty} \frac{1}{ 4 \Phi^{(N)}} \\
&\geq  \liminf_{N \rightarrow \infty} \frac{\sqrt{\gamma^{(N)}}}{16 \sqrt{2  \log \left( \frac{4}{\pi_{\ast}^{(N)}} \right) \log \left(\frac{\sqrt{\beta^{(N)}}}{2} \right) }} \sqrt{\tau^{(N)}} \\
&=  \frac{\sqrt{\gamma}}{16 \sqrt{ 2 \log \left( \frac{4}{\pi_{\ast}} \right) \log \left(\frac{\sqrt{\beta}}{2} \right) }} \sqrt{\tau},
\end{split}
\ee
where the final equality holds from the fact that equality \eqref{EqRegCond1} with $i = 2$  implies $\lim_{N \rightarrow \infty} \pi_{\ast}^{(N)} = \pi_{\ast}$, the definition $\pi_* = \inf_{S \in {\mathcal B}} \pi(S)$, and the general relation 
\be 
| \inf_{S \in {\mathcal B}} \mu(S) - \inf_{S' \in {\mathcal B}} \nu(S)| \leq \sup_{S \in {\mathcal B}} | \mu(S) - \nu(S) | 
\ee 
for any probability measures $\mu$ and $\nu$ on $\Omega$. 
\end{proof}

\subsection{Proof of Proposition \ref{PropConvDiscChain}} \label{SecProofProp}

We begin with some technical lemmas.  

\begin{lemma} \label{KernelApproxLemma} 
Let the notation and assumptions be as in the statement of Proposition \ref{PropConvDiscChain}. For every $\epsilon > 0$, there exists $M = M(\epsilon)$ such that $N > M$ implies 
\be 
\label{Prok}
d_{\mathrm{Prok}}( K^{(N)}(x,\cdot), K(x,\cdot) ) \leq \epsilon
\ee 
for all $x \in \Omega^{(N)}$. Furthermore, for all $\epsilon > 0$ there exists some $M = M(\epsilon) < \infty$ so that $N > M(\epsilon)$ implies
\be \label{IneqKernelApproxLemmaSimpIneq3}
\sup_{x,y \in \Omega^{(N)}} (1 - \propN(x,y)) \leq \epsilon.
\ee
\end{lemma}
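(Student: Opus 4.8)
The plan is to control the two kernels $K^{(N)}(x,\cdot)$ and $K(x,\cdot)$ by comparing them term-by-term in the mixture representation \eqref{EqKernelRepresentation}, and to use the fact that both kernels share the same holding probability $\alpha_x$ and the same mixing weights $K_{x,U}$. The only discrepancy between $K^{(N)}(x,\cdot)$ and $K(x,\cdot)$ comes from (a) replacing each restricted stationary measure $\pi|_U$ by its discretized analogue $\pi^{(N)}|_U$ in the proposal kernel $L^{(N)}$, and (b) the Metropolis--Hastings acceptance factor $\propN(x,y)$. I will show both of these corrections vanish uniformly in $x$ as $N\to\infty$, with \eqref{IneqKernelApproxLemmaSimpIneq3} being essentially the key estimate from which \eqref{Prok} follows.

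First I would prove \eqref{IneqKernelApproxLemmaSimpIneq3}. Write out $\frac{\pi^{(N)}(y) L^{(N)}(y,x)}{\pi^{(N)}(x) L^{(N)}(x,y)}$ explicitly using \eqref{LN}. Because $K$ is reversible with respect to $\pi$, and because $L^{(N)}$ is essentially a discretization of $K$, the ratio inside \eqref{lambdaN} should be $1 + o(1)$ uniformly; concretely, a point $y$ with $K^{(N)}(x,y)>0$ and $x\neq y$ lies in some $U\in\mathcal U_x$ with $K_{x,U}>0$, and by reversibility of $K$ one has $x\in U'$ for some $U'\in\mathcal U_y$ with matching ergodic flow. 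The discretization errors are governed by $\theta^{(2)}(\epsilon,m,N)=\sup_{S\in\mathcal B}|\pi^{(N)}(S)-\pi(S)|$ (which tends to $0$ by Assumption \ref{AssumptionsRegularityConditions}(2) with $i=2$) together with the density bounds in \eqref{IneqDensityBlah} (via $C^{-1}<\rho<C$), which give that $\pi^{(N)}(y)/\pi^{(N)}(U)$ and $\pi(y)/\pi(U)$-type ratios differ by a factor $1+o(1)$. Combining these, the numerator and denominator of the ratio in \eqref{lambdaN} agree up to a multiplicative $1+o(1)$ error uniform in $x,y$, hence $1-\propN(x,y)\le\epsilon$ for $N$ large. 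The main obstacle here is bookkeeping: one has to be careful that $U$ may be a union of up to $q$ hypercubes whose $\pi^{(N)}$-mass stays bounded below (this is where Assumption \ref{AsmpSimpleIntroThm}(5) on side-length $\ge\psi$, or $\pi_\ast>0$ together with $\pi^{(N)}_\ast\to\pi_\ast$, keeps denominators away from $0$), and that the boundary lattice points of $U$ contribute negligibly.

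Next I would deduce \eqref{Prok}. Using \eqref{IneqProkBound}, it suffices to couple a draw $Y\sim K^{(N)}(x,\cdot)$ with a draw $Y'\sim K(x,\cdot)$ so that $\P[|Y-Y'|>\epsilon]\le\epsilon$. Couple the holding events (both happen with probability exactly $\alpha_x$, giving $Y=Y'=x$), and on the complement couple the choice of $U\in\mathcal U_x$ to be identical (same weights $K_{x,U}$). Given $U$, I must couple $\pi^{(N)}|_U$ (up to the acceptance correction) with $\pi|_U$: the Prokhorov distance between $\pi^{(N)}|_U$ and $\pi|_U$ is at most $\theta^{(3)}(\epsilon,m,N)\to 0$, and the probability that the Metropolis step rejects is at most $\sup_{x,y}(1-\propN(x,y))\le\epsilon$ by \eqref{IneqKernelApproxLemmaSimpIneq3}; on the rejection event I fall back to the trivial bound that $|Y-Y'|\le\mathrm{diam}(\Omega)$, which contributes at most $\epsilon$ to the probability. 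Choosing $N$ large enough that all these error terms are below a suitable fraction of $\epsilon$ gives a coupling witnessing $d_{\mathrm{Prok}}(K^{(N)}(x,\cdot),K(x,\cdot))\le\epsilon$, uniformly in $x\in\Omega^{(N)}$. The delicate point is that Prokhorov control of $\pi^{(N)}|_U$ vs $\pi|_U$ is only available for $U\in\mathcal B$, which is exactly why the sets appearing in the representation are assumed to lie in a class on which $\theta^{(3)}\to 0$; one also uses that for a fixed $U$ the discretization grid lines up well because $U$ is a finite union of hypercubes, so $\mathcal{P}^{(N)}$-type rounding errors of a single rectangle shrink like $O(1/N)$.
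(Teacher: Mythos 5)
Your proposal is correct and follows essentially the same route as the paper: the acceptance probability bound \eqref{IneqKernelApproxLemmaSimpIneq3} is obtained exactly as in the paper by writing out $\propN(x,y)$, using reversibility of $K$ to see that the continuous-measure ratio equals $1$, and controlling the $\pi^{(N)}(U)$ versus $\pi(U)$ error via the $i=2$ condition together with $\pi_\ast>0$; and \eqref{Prok} is then deduced by the same four-stage coupling (holding event, choice of $U$, Prokhorov-close draws from $\pi|_U$ and $\pi^{(N)}|_U$ via the $i=3$ condition, and the acceptance variable handled by \eqref{IneqKernelApproxLemmaSimpIneq3}). The only cosmetic difference is that you invoke the density bounds \eqref{IneqDensityBlah}, which are not among the hypotheses of Proposition \ref{PropConvDiscChain}; they are not actually needed, since the $\theta^{(2)}$, $\theta^{(3)}$ conditions and $\pi_\ast>0$ already supply all the quantitative control.
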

\begin{proof} 
Fix $\epsilon > 0$. By relation \eqref{EqRegCond1} with $i=3$, it is possible to choose $M_{1} = M_{1}(\epsilon)$ such that $N > M_{1}(\epsilon)$ implies $\sup_{S \in \mathcal{B}} d_{\mathrm{Prok}}(\pi^{(N)}|_{S}, \pi|_{S}) < \frac{\epsilon}{4}$.   Fix $N > M_{1}$. 
For each $x \in \Omega$, we now describe a coupling of the distributions $K(x,\cdot)$  and $K^{(N)}(x,\cdot)$. 
Let $\alpha_x, {\mathcal U}_x, K_{x,U}, {\mathcal U}_x \subset {\mathcal A}, U \in {\mathcal U}_x,$ be the quantities in 
\eqref{EqKernelRepresentation}, and recall $\pi$ is the stationary distribution of the kernel $K$.  
Consider a  probability space $(\Omega, {\mathcal F}, \mathbb{P})$ that is sufficiently large to support the following: 
\begin{enumerate}
\item 
A Bernoulli random variable $R_{1}$ with $\P(R_{1} = 0)  = \alpha_x$ and $\P(R_{1} = 1) = 1 - \alpha_x$; 
\item 
A $\mathcal{U}_x$-valued random element $R_2$, independent of $R_1$, such that 
$\P(R_2 = U) = K_{x,U}$, $U \in \mathcal{U}_x$. 
\item 
A pair of random variables $(R_3, R_3^{(N)})$ that couple 
$\pi |_{R_{2}}$ and $\pi^{(N)}|_{R_{2}}$ (i.e., with  $R_3 \sim \pi |_{R_{2}}$ and $R_3^{(N)} \sim \pi^{(N)}|_{R_{2}}$) 
with the property that 
\be
\label{R3-coup}
 \P\left[ | R_{3}^{(N)} - R_{3}| > \frac{\epsilon}{4}\right] \leq \frac{\epsilon}{4}. 
\ee
 Such a coupling is possible by \eqref{EqRegCond1} for $i=3$ and our choice of $N > M_{1}$. 
\item 
Finally, let $R_{4}^{(N)}$ be a $0$-$1$-Bernoulli random variable with 
$\P(R_{4}^{(N)} = 1) = \propN(x, R_{3}^{(N)})$. 
\end{enumerate}
We now define  random variables $X$ and $X^{(N)}$ on $(\Omega, {\mathcal F}, \mathbb{P})$ as follows: 
\begin{enumerate}
\item If $R_{1} = 0$, set $X = x$; otherwise, set $X = R_{3}$. 
 \item 
 If $R_{1} R_{4}^{(N)} = 0$, set $X^{(N)} = x$; otherwise, set $X^{(N)} = R_{3}^{(N)}$.  
\end{enumerate}
 In view of \eqref{EqKernelRepresentation}, it is easy to verify that $X \sim K(x, \cdot)$; and likewise, 
 from the definition of $K^{(N)}$, it is easy to see that $X^{(N)} \sim K^{(N)}(x,\cdot)$. 
Thus, for $N > M_1$, $(X,X^{(N)})$ is a coupling of $K(x,\cdot)$  and $K^{(N)}(x,\cdot).$

Now, note that on the event $\{R_4^{(N)} = 1\}$, $|X- X^{(N)}| = |R_3 - R_3^{(N)}| 1_{\{ R_1 = 1\}}$. 
Thus, \eqref{R3-coup} implies 
\be \label{IneqKernelApproxLemmaSimpIneq0}
\P\left[ \left\{| X - X^{(N)} | > \frac{\epsilon}{4} \right\} \cap \left\{R_{4}^{(N)} = 1 \right\} \right] \leq \frac{\epsilon}{4}.
\ee
On the other hand, 
\be 
\P\left[ \left\{| X - X^{(N)} | > \frac{\epsilon}{4} \right\} \cap \left\{R_{4}^{(N)} = 0 \right\} \right]  
\leq \P\left[R_{4}^{(N)} = 0\right] \leq \sup_{y} (1 - \propN(x,y)). 
\ee
Thus, to establish both \eqref{Prok} and \eqref{IneqKernelApproxLemmaSimpIneq3}, it suffices to show that for each $\epsilon > 0$, there exists $M_2 = M_2(\epsilon) > M_1$ such that for all $N > M_2$, 
\be
\label{toshow}
 \sup_{x,y \in \Omega^{(N)}} (1 - \propN(x,y)) \leq \frac{\epsilon}{4}. 
\ee
Since $\propN(x,y) = 1$ when $x = y$, it suffices to consider $x, y \in \Omega^{(N)}$ for which $x \neq y$. 
Now, for $x, y \in \Omega^{(N)}, x \neq y,$ by the definition of $\lambda^{(N)}$ in \eqref{lambdaN} and \eqref{LN},  
\be
\propN(x, y)  &  =  \min \left(1, \frac{\pi^{(N)}(y) \sum_{U \in \mathcal{U}_{y}, U \ni x} K_{y,U} \frac{\pi^{(N)}(x)}{\pi^{(N)}(U)}}
{\pi^{(N)}(x) \sum_{U \in \mathcal{U}_{x}, U \ni y} K_{x,U} \frac{\pi^{(N)}(y)}{\pi^{(N)}(U)}} 
\frac{(1 - \alpha_{y})}{(1 - \alpha_{x})} \right) \\
& =   \min \left(1, \frac{\sum_{U \in \mathcal{U}_{y}, U \ni x} K_{y,U} \frac{1}{\pi^{(N)}(U)}}
{\sum_{U \in \mathcal{U}_{x}, U \ni y} K_{x,U} \frac{1}{\pi^{(N)}(U)}} 
\frac{(1 - \alpha_{y})}{(1 - \alpha_{x})} \right). \\
\ee
By equality \eqref{EqRegCond1} with $i = 2$, for all $0 < \epsilon < 1$ there exists $M_{2} = M_{2}(\epsilon)$ (wlog chosen to be greater than $M_1$)  such that $N > M_{2}$ implies  
\be \label{IneqKernelApproxLemmaSimpIneq2}
\sup_{U \in \mathcal{B}} | \pi^{(N)}(U) - \pi(U) | < \frac{\epsilon}{100} \pi_{\ast}.
\ee 
We observe that $\pi$ is the stationary measure of the reversible kernel $K$, and so 

\be 
\frac{  \sum_{U \in \mathcal{U}_{y}, U \ni x} K_{y,U} \frac{1}{\pi(U)} }{\sum_{U \in \mathcal{U}_{x}, U \ni y} K_{x,U} \frac{1}{\pi(U)} } \frac{1 - \alpha_{y}}{1 - \alpha_{x}} &= \frac{ \rho(y) \sum_{U \in \mathcal{U}_{y}, U \ni x} K_{y,U} \frac{\rho(x)}{\pi(U)} }{\rho(x) \sum_{U \in \mathcal{U}_{x}, U \ni y} K_{x,U} \frac{\rho(y)}{\pi(U)} } \frac{1 - \alpha_{y}}{1 - \alpha_{x}} \\
&= \frac{\rho(y) K(y,dx)}{\rho(x) K(x,dy)} \\
& = 1. 
\ee 
 Combining this observation with the expression for $\lambda^{(N)}(x,y)$  and \eqref{IneqKernelApproxLemmaSimpIneq2}, we find that \eqref{toshow} holds 
for $N > M_{2}(\epsilon)$, which completes the proof of the lemma with $M (\epsilon) = M_2(\epsilon)$. 
\end{proof}

We then show: 

\begin{lemma} \label{LemmaUnifProkApprox}
Let the notation and assumptions of Theorem \ref{LiftImpBd3} hold. Fix $x \in \Omega$ and $T \in \mathbb{N}$. There exists a sequence of initial conditions $\{x^{(N)}\}_{N \in \mathbb{N}}$ such that, if  $X^{(N)}$ is the Markov chain with kernel $K^{(N)}$ started at $x^{(N)}$, and $X$ is the Markov chain with kernel $K$ started at $x$, then
\be \label{IneqPropMixTimeApprProk}
\limsup_{N \rightarrow \infty} \max_{0 \leq t \leq T} d_{\mathrm{Prok}}(X_{t}^{(N)}, X_{t}) = 0.
\ee 
\end{lemma}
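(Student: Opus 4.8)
The plan is to prove Lemma \ref{LemmaUnifProkApprox} by propagating the one-step Prokhorov bound from Lemma \ref{KernelApproxLemma} forward through $T$ steps of the two chains, using a coupling argument. First I would fix $x \in \Omega$ and $T \in \mathbb{N}$, and define $x^{(N)} \in \Omega^{(N)}$ to be a nearest grid point to $x$, so that $|x^{(N)} - x| \leq \frac{d}{2N} \to 0$; in particular $d_{\mathrm{Prok}}(X_0^{(N)}, X_0) = d_{\mathrm{Prok}}(\delta_{x^{(N)}}, \delta_x) \to 0$. The heart of the argument is an inductive claim: for every $\eta > 0$ and every $0 \le t \le T$, there is an $M = M(\eta, t)$ such that $N > M$ implies the existence of a coupling $(X_t, X_t^{(N)})$ with $\P[|X_t - X_t^{(N)}| > \eta] \le \eta$, whence $d_{\mathrm{Prok}}(X_t^{(N)}, X_t) \le \eta$ by \eqref{IneqProkBound}. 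Since $T$ is fixed and finite, taking the maximum over $0 \le t \le T$ only costs a finite maximum of the thresholds $M(\eta,t)$, which yields \eqref{IneqPropMixTimeApprProk}.

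For the inductive step, suppose the coupling $(X_t, X_t^{(N)})$ with $\P[|X_t - X_t^{(N)}| > \eta'] \le \eta'$ has been constructed for a suitably small $\eta'$ to be chosen. I would build $(X_{t+1}, X_{t+1}^{(N)})$ in two stages. On the event that $X_t$ and $X_t^{(N)}$ are close, I first use the kernel-continuity condition \eqref{IneqAssKernelCont} (part 3 of Assumptions \ref{AsmpSimpleIntroThm}, inherited via Assumptions \ref{AssumptionsRegularityConditions}) together with Lemma \ref{KernelApproxLemma} to couple $K(X_t, \cdot)$ with $K^{(N)}(X_t^{(N)}, \cdot)$: concretely, couple $K^{(N)}(X_t^{(N)},\cdot)$ to $K(X_t^{(N)}, \cdot)$ within Prokhorov distance $\le \epsilon$ for $N$ large by \eqref{Prok}, and separately handle the step from $K(X_t^{(N)},\cdot)$ to $K(X_t,\cdot)$. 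The subtlety here is that \eqref{IneqAssKernelCont} only controls the total-variation distance up to the holding probability $\min(\alpha_{X_t}, \alpha_{X_t^{(N)}})$; on the "holding" part of the kernels one does not get a TV bound, but one does not need one, because on that part both chains simply stay put, and if $|X_t - X_t^{(N)}|$ was already small then $|X_{t+1} - X_{t+1}^{(N)}|$ remains small. More carefully, I would decompose each kernel into its holding part and its moving part; couple the two "coin flips" that decide holding vs.\ moving so that they agree with probability at least $1 - \epsilon$ (possible since $|\alpha_{X_t} - \alpha_{X_t^{(N)}}|$ can be made small, or alternatively absorb a small discrepancy into the error budget); on the joint holding event set both chains to stay, inheriting the previous-step discrepancy; on the joint moving event use \eqref{IneqAssKernelCont} and \eqref{Prok} to couple the new positions within $\eta$ with probability $\ge 1 - \eta$ for $N$ large. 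Chaining the failure probabilities with a union bound and choosing $\eta'$, $\epsilon$ small enough relative to $\eta$ closes the induction.

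The main obstacle I anticipate is handling the interaction between the holding probabilities $\alpha_x$ and the continuity estimate in a fully rigorous way: \eqref{IneqAssKernelCont} is stated so that the TV distance between $K(x,\cdot)$ and $K(y,\cdot)$ can be as large as $\min(\alpha_x,\alpha_y)$ even for $x$ arbitrarily close to $y$, which means a naive "couple $K(X_t,\cdot)$ with $K(X_t^{(N)},\cdot)$ in TV" step fails. The resolution is to recognize that this residual TV distance is entirely attributable to the atom at the current location — i.e., the measures $K(x,\cdot) - \alpha_x\delta_x$ and $K(y,\cdot) - \alpha_y\delta_y$ (suitably renormalized) are genuinely close in the relevant sense when $x$ is near $y$ — so one should couple the moving parts and the holding parts separately rather than the full kernels. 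I would therefore phrase the induction in terms of a coupling where, conditionally on the past, the two chains either both hold (and the spatial gap does not grow) or both move to nearby points, and carefully verify using \eqref{EqKernelRepresentation}, \eqref{IneqAssKernelCont}, \eqref{Prok}, and \eqref{IneqKernelApproxLemmaSimpIneq3} that the failure probability of this joint structure is $o_N(1)$ plus an arbitrarily small constant. A secondary, more routine point is that $\propN(x,y) < 1$ introduces an extra rejection event in $K^{(N)}$, but \eqref{IneqKernelApproxLemmaSimpIneq3} makes this negligible for large $N$, and it is absorbed into the same error budget.
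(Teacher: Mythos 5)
Your proposal is correct and follows essentially the same route as the paper: induction on the time index with $x^{(N)}$ a nearby grid point, a one-step coupling built from Lemma \ref{KernelApproxLemma} and the continuity condition \eqref{IneqAssKernelCont}, and conversion to Prokhorov distance via \eqref{IneqProkBound}. In fact you are more explicit than the paper about the one genuine subtlety — that \eqref{IneqAssKernelCont} only controls the non-holding parts of the kernels, so the holding and moving components must be coupled separately — a point the paper's proof passes over silently.
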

\begin{proof}
For each $N \in \mathbb{N}$ and fixed $T \in \mathbb{N}$, we will construct a coupling  $\{X_{t}, X_{t}^{(N)}\}_{t =0}^{T}$ of the trajectories of the two Markov chains up to $T$ steps, with the property that for every $\epsilon  > 0$, 
\be \label{IneqPropMixTimeApprProkStrong}
\limsup_{N \rightarrow \infty} \P\left[ \max_{0 \leq t \leq T} |X_{t}^{(N)} - X_{t}| > \epsilon\right] = 0.
\ee 
We prove equality \eqref{IneqPropMixTimeApprProkStrong} by induction on $T$. For $T = 0$, by the definition of $\Omega^{(N)}$ in Section \ref{SeubsecGenThmPf}, it is always possible to choose $X_{0}^{(N)} = x^{(N)} \in \Omega^{(N)}$ such that $| x^{(N)} - x | \leq \frac{2d}{N}$. Thus, equality \eqref{IneqPropMixTimeApprProkStrong} holds for $T=0$. Next, we assume the equality holds up to time $t$, and we show that it holds up to time $t+1$. 
By the induction hypothesis, for all $\epsilon > 0$ there exists $M_{1} = M_{1}(t,\epsilon)$ such that $N > M_{1}$ implies
\be \label{IneqProkLemBasicIneq1}
\P \left[ \max_{0 \leq s \leq t} | X_{s}^{(N)} - X_{s} | > \frac{\epsilon}{10}\right] \leq \frac{\epsilon}{10}.
\ee  
Combining inequality \eqref{IneqAssKernelCont} with Lemma \ref{KernelApproxLemma}, for all $\epsilon > 0$ there exists $M_{2} = M_{2}(\epsilon,T)$ and $0 < \delta = \delta(\epsilon,t) \leq \epsilon$ such that $N > M_{2}$ implies the existence of a coupling of the four random variables $X_{t}^{(N)}, X_{t+1}^{(N)}, X_{t}, X_{t+1}$ with the property
\be \label{IneqProkLemBasicIneq2}
\P \left[ | X_{t+1}^{(N)} - X_{t+1} | > \frac{\epsilon}{10} \, \big| \, | X_{t}^{(N)} - X_{t} | < \delta\right] \leq \frac{\epsilon}{10}.
\ee
Finally, fix some $\epsilon > 0$. For $\delta = \delta(\frac{\epsilon}{3}, t)$ and $N > \max ( M_{1}(t, \delta), M_{2}(t, \frac{\epsilon}{3}))$, we have by inequalities \eqref{IneqProkLemBasicIneq1} and \eqref{IneqProkLemBasicIneq2} that
\be 
\P\left[ |X_{t+1}^{(N)} - X_{t+1} | > \frac{\epsilon}{10}\right] \leq \frac{2\epsilon}{10}.
\ee 
Combining this with inequality \eqref{IneqProkLemBasicIneq1}, we have
\be \label{IneqNeedTempHeloName}
\P\left[ \max_{0 \leq s \leq t+1} | X_{s}^{(N)} - X_{s} | > \frac{\epsilon}{10}\right] \leq \frac{3\epsilon}{10}.
\ee 
Recalling that $X_{s}^{(N)}$, $X_{s}$ are couplings of $K(x,\cdot)$ and $K^{(N)}(x,\cdot)$, inequality \eqref{IneqNeedTempHeloName} completes the induction step and completes the proof of equality \eqref{IneqPropMixTimeApprProkStrong}. Combining this with inequality \eqref{IneqProkBound} completes the proof of equality \eqref{IneqPropMixTimeApprProk}.
\end{proof}

We use the following standard lemma on approximation of sets with elementary sets:

\begin{lemma} \label{LemmaRegularApprox}
Let $\mu$ be a regular probability measure on $\mathbb{R}^{d}$ with the usual Borel $\sigma$-algebra $\mathcal{A}$. Then for any $A \in \mathcal{A}$ and any $\delta >0$, there exists a finite collection of sets $\{B_{i}\}_{i=1}^{m}$ of the form 
\be 
B_{i} = \prod_{j=1}^{d} [a_{j}, b_{j}]
\ee 
such that the symmetric difference $A \Delta (\cup_{i=1}^{m} B_{i})$ satisfies
\be 
\mu(A \Delta (\cup_{i=1}^{m} B_{i})) < \delta \mu(A).
\ee 
\end{lemma}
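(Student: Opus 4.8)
The plan is to prove this by a direct appeal to the regularity of $\mu$: sandwich $A$ between a compact set and an open set, cover the compact set by finitely many small closed cubes lying inside the open set, and take these cubes to be the $B_i$. We may assume $\mu(A) > 0$ (when $\mu(A) = 0$ one may take $B_1$ to be a single point of zero $\mu$-measure, which exists since $\mu$ has at most countably many atoms), and we set $\varepsilon := \tfrac{1}{2} \delta \mu(A) > 0$.

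First I would use regularity to trap $A$. By inner regularity of $\mu$, fix a compact set $C \subseteq A$ with $\mu(A \setminus C) < \varepsilon$; by outer regularity, fix an open set $U \supseteq A$ with $\mu(U \setminus A) < \varepsilon$. Next I would build the rectangles: for each $x = (x_1,\dots,x_d) \in C$, since $C \subseteq A \subseteq U$ and $U$ is open there is $r_x > 0$ with the closed cube $Q_x := \prod_{j=1}^d [x_j - r_x, x_j + r_x] \subseteq U$, and $x$ lies in the interior of $Q_x$. The open cubes $\{\mathrm{int}(Q_x)\}_{x \in C}$ form an open cover of the compact set $C$, so finitely many of them suffice: there are points $x_1,\dots,x_m \in C$ with $C \subseteq \bigcup_{i=1}^m \mathrm{int}(Q_{x_i}) \subseteq \bigcup_{i=1}^m Q_{x_i}$. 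Setting $B_i := Q_{x_i}$, each $B_i$ has the required form $\prod_{j=1}^d [a_j, b_j]$, there are finitely many of them, and $C \subseteq \bigcup_{i=1}^m B_i \subseteq U$.

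Finally, from $C \subseteq \bigcup_i B_i$ we get $A \setminus \bigcup_i B_i \subseteq A \setminus C$, and from $\bigcup_i B_i \subseteq U$ we get $(\bigcup_i B_i) \setminus A \subseteq U \setminus A$, so that
\[
\mu\Big( A \,\Delta\, \bigcup_{i=1}^m B_i \Big) \le \mu(A \setminus C) + \mu\big( U \setminus A \big) < 2\varepsilon = \delta \mu(A),
\]
which is the claim. There is essentially no obstacle in this argument; the only minor points worth stating are the standard finite-subcover step for the compact set $C$ and the observation that the hypothesis ``$\mu$ is regular'' is exactly what supplies both the inner (compact) and outer (open) approximations used above.
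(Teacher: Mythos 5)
Your proof is correct and complete for the case $\mu(A)>0$, which is the only case in which the lemma is ever invoked in the paper, but it takes a different route from the paper's argument. The paper works purely with outer approximation: regularity supplies a \emph{countable} union of rectangles $\cup_{i\in\mathbb{N}}B_i \supseteq A$ whose excess measure over $A$ is at most $\delta\mu(A)/200$, and the finite collection is then extracted by continuity of measure from below, choosing $m$ so that $\mu(\cup_{i=1}^m B_i)$ is within $\delta\mu(A)/200$ of the full countable union. You instead sandwich $A$ between a compact set $C$ and an open set $U$ and use a finite subcover of $C$ by closed cubes contained in $U$; the compactness step replaces the paper's truncation step. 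Both arguments are elementary and of comparable length; yours has the mild advantage of making the finiteness of the cover immediate from compactness, while the paper's avoids any appeal to inner regularity. One small caveat: your treatment of the degenerate case $\mu(A)=0$ does not actually work, since the conclusion demands the strict inequality $\mu(A\,\Delta\,\cup_i B_i)<\delta\mu(A)=0$, which no choice of $B_i$ can satisfy; the lemma as stated is simply vacuous there (the paper silently ignores this case too, and only ever applies the lemma to sets of positive measure), so it would be cleaner to note that the statement is only meaningful for $\mu(A)>0$ rather than to offer a fix.
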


\begin{proof}
Since $\mu$ is a regular measure, there exists a countable collection $\{ B_{i} \}_{i \in \mathbb{N}}$ of sets of the form $B_{i} = \prod_{j=1}^{d} [a_{j}, b_{j}]$ that satisfy
\be \label{EqRegMeasureStart}
\begin{array}{rl}
\displaystyle 
A &\subset \cup_{i \in \mathbb{N}} B_{i} \\
\mu( (\cup_{i \in \mathbb{N}} B_{i}) \backslash A) &< \frac{\delta \, \mu(A)}{200}.
\end{array}
\ee 
Since $\{ \mu(\cup_{i =1}^{m} B_{i}) \}_{m \in \mathbb{N}}$ is a sequence of nonnegative increasing numbers that converges to $\mu(\cup_{i \in \mathbb{N}} B_{i}) \in [\mu(A), \mu(A)(1 + \frac{\delta}{200})) $, there exists some $m \in \mathbb{N}$ such that 
\be 
\mu(\cup_{i=1}^{m} B_{i}) > (1-  \frac{\delta}{200} ) \mu(A).
\ee 
This value of $m$ satisfies the requirements of the lemma.
\end{proof}

\begin{lemma} \label{LemmaNiceApproxCondSet}
Assume that the conditions of Theorem \ref{LiftImpBd3} hold. For every $\epsilon > 0$, there exists some $\delta > 0$, $m \in \mathbb{N}$ and $S \in \mathcal{H}_{\delta,m}$ such that 
\be \label{IneqNicApproxCondSetConcludingIneq}
\Phi(S) < \Phi + \epsilon.
\ee 
\end{lemma}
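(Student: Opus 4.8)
The plan is to approximate a set that nearly realizes the conductance $\Phi$ by a finite union of closed rectangles, using Lemma \ref{LemmaRegularApprox}, and to check that $S\mapsto\Phi(S)$ is continuous enough — with respect to the $\pi$-measure of symmetric differences — that this approximation changes the conductance by less than $\epsilon$. The only subtlety is ensuring the rectangles can be chosen with side lengths bounded below by a fixed $\delta>0$, so that the approximating set lies in $\mathcal{H}_{\delta,m}$.

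First I would fix $\epsilon>0$ and, using the definition \eqref{EqDefConductanceAsInf} of $\Phi$, choose a measurable $S_{0}$ with $0<\pi(S_{0})\le\tfrac12$ and $\Phi(S_{0})<\Phi+\tfrac{\epsilon}{2}$; note also $\Phi(S_{0})\le 1$ because $\int_{S_{0}}K(x,S_{0}^{c})\,\pi(dx)\le\pi(S_{0})$. Writing $Q(A,B):=\int_{A}K(x,B)\,\pi(dx)$ for measurable $A,B$ (the continuous-state analogue of \eqref{EqDefQCombinedChain}), reversibility of $K$ with respect to $\pi$ gives $Q(A,B)=Q(B,A)$ and $Q(A,\Omega)=Q(\Omega,A)=\pi(A)$. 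The key continuity estimate is that for any measurable $S$, with $E:=S\,\Delta\,S_{0}=S^{c}\,\Delta\,S_{0}^{c}$, the integrand $\mathbf{1}_{S}(x)\mathbf{1}_{S^{c}}(y)-\mathbf{1}_{S_{0}}(x)\mathbf{1}_{S_{0}^{c}}(y)$ has modulus at most $\mathbf{1}_{E}(x)+\mathbf{1}_{E}(y)$, so integrating against $K(x,dy)\,\pi(dx)$ and using $Q(E,\Omega)=Q(\Omega,E)=\pi(E)$ yields
\[
|Q(S,S^{c})-Q(S_{0},S_{0}^{c})|\le 2\,\pi(E),\qquad|\pi(S)-\pi(S_{0})|\le\pi(E).
\]
Consequently, if $\pi(S\,\Delta\,S_{0})\le\delta'\,\pi(S_{0})$ for some $\delta'\in(0,1)$, then $\pi(S)\ge(1-\delta')\pi(S_{0})>0$ and, bounding $Q(S,S^{c})\le Q(S_{0},S_{0}^{c})+2\delta'\pi(S_{0})$ in the numerator,
\[
\Phi(S)=\frac{Q(S,S^{c})}{\pi(S)}\le\frac{\Phi(S_{0})+2\delta'}{1-\delta'};
\]
since $\Phi(S_{0})\le1$ one can fix $\delta'=\delta'(\epsilon)\in(0,1)$ small enough that the right side is $<\Phi(S_{0})+\tfrac{\epsilon}{2}<\Phi+\epsilon$.

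Next I would apply Lemma \ref{LemmaRegularApprox} to the (regular) Borel probability measure $\pi$ on $\mathbb{R}^{d}$ with $A=S_{0}$ and this $\delta'$, obtaining closed rectangles $B_{1},\dots,B_{m}$ with $\pi\big(S_{0}\,\Delta\,\bigcup_{i}B_{i}\big)<\delta'\,\pi(S_{0})$. I would then discard any $B_{i}$ with a side of length $0$: such a rectangle is Lebesgue-null, hence $\pi$-null since $\pi\ll$ Lebesgue under Assumptions \ref{AssumptionsRegularityConditions}, so removing it changes $\bigcup_{i}B_{i}$, and hence its symmetric difference with $S_{0}$, only by a $\pi$-null set; at least one rectangle survives, since otherwise $\bigcup_{i}B_{i}$ would be $\pi$-null and then $\pi(S_{0})<\delta'\pi(S_{0})$, a contradiction. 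Let $B_{1},\dots,B_{m'}$ be the survivors, let $\delta>0$ be the smallest of their finitely many strictly positive side lengths, and set $S:=\bigcup_{i=1}^{m'}B_{i}$. Then each $B_{i}\in\mathcal{R}_{\delta}$, so $S\in\mathcal{H}_{\delta,m'}$, and since $\pi(S\,\Delta\,S_{0})<\delta'\pi(S_{0})$, the estimate above gives $\Phi(S)<\Phi+\epsilon$, which is \eqref{IneqNicApproxCondSetConcludingIneq}.

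The step I expect to be the main obstacle is the last one: arranging that the rectangular approximation has a uniform positive lower bound on its side lengths, so the approximating set lies in some $\mathcal{H}_{\delta,m}$ rather than being a union of arbitrarily thin rectangles. This is handled by the finiteness of the collection produced by Lemma \ref{LemmaRegularApprox} together with the absolute continuity of $\pi$, which allows the degenerate (measure-zero) pieces to be deleted without affecting the approximation; everything else is the routine perturbation bound for the conductance functional.
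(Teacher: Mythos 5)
Your proof is correct and follows essentially the same route as the paper's: pick a near-optimal set, approximate it by a finite union of rectangles via Lemma \ref{LemmaRegularApprox}, and control the change in $\Phi(S)$ through the $\pi$-measure of the symmetric difference (your bound $|Q(S,S^{c})-Q(S_{0},S_{0}^{c})|\le 2\pi(S\,\Delta\,S_{0})$ is the same estimate the paper obtains by decomposing $Q(S,S^{c})$ over $U$ and $U^{c}$). Your extra step of discarding degenerate rectangles to certify a positive lower bound on the side lengths, hence membership in $\mathcal{H}_{\delta,m}$, is a legitimate detail that the paper's proof leaves implicit.
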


\begin{proof}
Since $\Phi = \inf_{0 < \pi(U) < \frac{1}{2}} \Phi(U)$ is defined as an infimum, there exists a set $U \subset \Omega$ with $0 < \pi(U) < \frac{1}{2}$ and 
\be 
\Phi(U) < \Phi + \frac{\epsilon}{100}.
\ee 
By Lemma \ref{LemmaRegularApprox}, there exists some $n \in \mathbb{N}$ and collection of sets  $\{B_{i}\}_{i=1}^{n}$ of the form 
\be 
B_{i} = \prod_{j=1}^{d} [a_{j}, b_{j}]
\ee 
such that  $S \equiv \cup_{i=1}^{n} B_{i}$ satisfies 
\be \label{EqRegMeasureEnd}
\pi(S \Delta U) \leq \frac{\epsilon \, \pi(U)}{50}.
\ee 
We now verify that this set $S$ satisfies inequality \eqref{IneqNicApproxCondSetConcludingIneq} by calculating: 
\be 
\Phi(S) &= \frac{Q(S,S^{c})}{\pi(S)} \\
&= \frac{1}{\pi(S)} \left( Q(S \cap U, S^{c}) + Q(S \cap U^{c}, S^{c}) \right) \\
&=  \frac{1}{\pi(S)} \left( Q(S \cap U, S^{c} \cap U^{c}) + Q(S \cap U, S^{c} \cap U)  + Q(S \cap U^{c}, S^{c}) \right) \\
&\leq \frac{1}{\pi(U) - \pi(S \Delta U)} \left( Q(U,U^{c}) + 2 \pi(S \Delta U) \right) \\
&\leq \frac{1+ \frac{\epsilon}{25}}{\pi(U)}  (Q(U,U^{c}) + \frac{\epsilon \pi(U)}{25}) \\
&\leq \Phi(U)(1 + \frac{\epsilon}{25}) + (1 + \frac{\epsilon}{25}) \, \frac{\epsilon}{25} \\
&\leq \Phi(U) + \epsilon,
\ee 
where the last line uses the fact that $\Phi(U) \leq 1$. This completes the proof of the lemma.
\end{proof}

Recall the definition of a \textit{thickening} in equation \eqref{EqDefThickening}. Whenever this notation is used in the remainder of this paper, we use the metric defined in equation \eqref{EqDefTorusMetric}. Note that, with this metric, if $S$ is a union of $d$-dimensional rectangles, then $S_{\delta}$ is also a union of $d$-dimensional rectangles. We have:

\begin{lemma}
Let Assumptions \ref{AssumptionsRegularityConditions} hold. For every fixed $0 < \epsilon < 1$ and $m \in \mathbb{N}$, the measure $\pi$ satisfies
\be \label{EqRegCond2}
\lim_{\delta \rightarrow 0}  \sup_{S \in \mathcal{H}_{\epsilon,m}} | \pi(S_{\delta}) - \pi(S) | = 0.
\ee 

\end{lemma}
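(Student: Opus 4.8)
The plan is to prove the equicontinuity statement \eqref{EqRegCond2} by exploiting the very simple geometry of the sets in $\mathcal{H}_{\epsilon,m}$ together with the boundedness of the density $\rho$ guaranteed by part \textbf{4} of Assumptions \ref{AsmpSimpleIntroThm} (which holds by part \textbf{1} of Assumptions \ref{AssumptionsRegularityConditions} via inclusion of part \textbf{3}, and more directly by part \textbf{4} of Assumptions \ref{AsmpSimpleIntroThm}; I should double-check which regularity hypothesis is in force, but in all cases $\rho$ is bounded above by some $C < \infty$). First I would observe that every $S \in \mathcal{H}_{\epsilon,m}$ is, up to complementation, a union of at most $m$ axis-parallel boxes each of whose side-lengths is at least $\epsilon$, and that $S_\delta \supseteq S$, so $\pi(S_\delta) - \pi(S) = \pi(S_\delta \setminus S) \geq 0$; it therefore suffices to bound $\pi(S_\delta \setminus S)$ uniformly over $S \in \mathcal{H}_{\epsilon,m}$ by something tending to $0$ as $\delta \to 0$. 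Since complementation is harmless here — $(S^c)_\delta \setminus S^c$ is contained in a $\delta$-neighbourhood of $\partial S$, just as $S_\delta \setminus S$ is — I can reduce to the case $S = \cup_{j=1}^k R_j$ with each $R_j \in \mathcal{R}_\epsilon$ and $k \le m$.

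Next I would bound the Lebesgue volume of the relevant set. For a single box $R = \prod_{i=1}^d [a_i, b_i]$ with every $b_i - a_i \geq \epsilon$ and with all of $R$, $R_\delta$ contained in the fixed compact $\Omega$ (so that the side lengths are bounded above by some $L$ depending only on $\Omega$), the thickening $R_\delta$ is the box $\prod_{i=1}^d [a_i - \delta, b_i + \delta]$, and an elementary expansion gives $\mathrm{Leb}(R_\delta \setminus R) = \prod_i (b_i - a_i + 2\delta) - \prod_i (b_i - a_i) \leq c_d (L + 2\delta)^{d-1}\delta$ for a dimensional constant $c_d$; for small $\delta$ this is at most $C' \delta$ with $C'$ depending only on $d$ and $\Omega$. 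Summing over the at most $m$ boxes and using subadditivity, $\mathrm{Leb}(S_\delta \setminus S) \leq m C' \delta$. Then the density bound gives $\pi(S_\delta \setminus S) = \int_{S_\delta \setminus S} \rho \, d\mathrm{Leb} \leq C \, m C' \delta$, which is independent of the particular $S$ and tends to $0$ as $\delta \to 0$. Taking the supremum over $S \in \mathcal{H}_{\epsilon,m}$ and then letting $\delta \to 0$ yields \eqref{EqRegCond2}.

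I expect the only genuine subtlety — the step I would be most careful about — is the handling of the complemented sets and of boxes that poke outside $\Omega$ or wrap around the torus: on $\mathbb{T}^d$ the thickening of a box is still a box (or all of $\mathbb{T}^d$) by the remark preceding the lemma, and intersecting everything with $\Omega$ only shrinks the sets, so the volume bound above still controls $\pi(S_\delta \triangle S)$; for the complement, $(\cup_j R_j)^c_\delta \setminus (\cup_j R_j)^c$ is contained in $\cup_j (R_j{}_\delta \setminus R_j)$ after a short set-theoretic check, so the same estimate applies. Everything else is a routine volume computation, so the write-up is short; the key inputs are the uniform lower bound $\epsilon$ on side lengths (which is never actually needed for the upper bound, only the uniform \emph{upper} bound on side lengths coming from compactness of $\Omega$ matters), the cardinality bound $m$, and the uniform boundedness of $\rho$.
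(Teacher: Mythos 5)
Your geometric estimate is fine and is essentially what the paper does: $S_\delta\setminus S$ lies in a $\delta$-neighbourhood of the boundaries of at most $m$ boxes with side lengths bounded by the diameter of $\Omega$, so its Lebesgue measure is $O(m\delta)$ uniformly over $\mathcal{H}_{\epsilon,m}$ (and, as you note, the lower bound $\epsilon$ on side lengths is irrelevant here). The handling of complements is also fine in spirit, although your set-theoretic claim that $(\cup_j R_j)^c_\delta\setminus(\cup_j R_j)^c\subseteq\cup_j(R_{j\delta}\setminus R_j)$ is not literally correct (a point of that set lies \emph{inside} some $R_j$); what you want is containment in the $\delta$-neighbourhood of $\cup_j\partial R_j$, which you invoke correctly a few lines earlier and which gives the same $O(\delta)$ volume bound.

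The genuine gap is the step where you integrate $\rho$ over $S_\delta\setminus S$ using an upper bound $\rho\leq C$. The lemma is stated under Assumptions \ref{AssumptionsRegularityConditions}, which import only parts \textbf{1}--\textbf{3} of Assumptions \ref{AsmpSimpleIntroThm}; the density bound \eqref{IneqDensityBlah} is part \textbf{4} and is deliberately \emph{not} assumed here --- dropping it is one of the points of Theorem \ref{LiftImpBd3}. The setup of Section \ref{SecLiftingBound} only guarantees that $\pi$ has \emph{some} density $\rho$, possibly unbounded, so your final inequality $\pi(S_\delta\setminus S)\leq C\,mC'\delta$ is not justified. The paper closes this by truncation: write $\rho_C=\min(\rho,C)$ and $M(C)=\{x:\rho(x)>C\}$, so that
\begin{equation*}
\pi(S_\delta\setminus S)\;\leq\;\pi(M(C))+\int_{S_\delta\setminus S}\rho_C(x)\,dx\;\leq\;\pi(M(C))+O(C\delta),
\end{equation*}
take $\delta\to 0$ for fixed $C$ to get the bound $\pi(M(C))$ uniformly over $\mathcal{H}_{\epsilon,m}$, and then let $C\to\infty$, using $\inf_{C}\pi(M(C))=0$ (integrability of $\rho$). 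With that modification your argument matches the paper's.
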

\begin{proof}
Fix $ 0 < \epsilon < 1$ and $m \in \mathbb{N}$. Also fix $0 < C < \infty$, define $\rho_{C}(x) = \min(\rho(x), C)$ to be a truncated version of the density, and define $M(C) = \{x \, : \, \rho(x) > C \}$ to be the points where $\rho_{C}(x) \neq \rho(x)$. For any $S \in \mathcal{H}_{\epsilon,m}$,
\be 
| \pi(S_{\delta}) - \pi(S) | &= \int_{ S_{\delta} \backslash S} \rho(x) dx \\
&= \int_{(S_{\delta} \backslash S) \cap M(C)} \rho(x) dx +  \int_{(S_{\delta} \backslash S) \cap M(C)^{c}} \rho_{C}(x) dx \\
&\leq \pi(M(C)) +  \int_{ (S_{\delta} \backslash S) } \rho_{C}(x) dx  \\
&\leq \pi(M(C)) + O(C \delta).
\ee 
Since this holds for any $S \in \mathcal{H}_{\epsilon,m}$, we have 
\be 
\lim_{\delta \rightarrow 0}  \sup_{S \in \mathcal{H}_{\epsilon,m}} | \pi(S_{\delta}) - \pi(S) | \leq \lim_{\delta \rightarrow 0} (\pi(M(C)) + O(C \delta)) = \pi(M(C)).
\ee 
Since this holds for every $0 < C <   \infty$, this implies 
\be 
\lim_{\delta \rightarrow 0}  \sup_{S \in \mathcal{H}_{\epsilon,m}} | \pi(S_{\delta}) - \pi(S) | \leq \inf_{0 < C < \infty} \pi(M(C)) = 0,
\ee
completing the proof.
\end{proof}

Finally, we prove Proposition \ref{PropConvDiscChain}:
\begin{proof} [Proof of Proposition \ref{PropConvDiscChain}]
We first show that
\be 
\mathrm{liminf}_{N \rightarrow \infty} \tau^{(N)} \geq \tau. 
\ee 
Fix $X_{0} = x \in \Omega$, $T \in \mathbb{N}$, $\epsilon$, $m \in \mathbb{N}$ and $S_{0} \in \mathcal{H}_{\epsilon,m}$. If $\P[X_{T} \in S_{0}] \geq \pi(S_{0})$, set $S = S_{0}$; if not, set $S = S_{0}^{c}$. Now, fix $\delta > 0$. By Lemma \ref{LemmaUnifProkApprox}, there exists a sequence of starting points $X_{0}^{(N)} = x^{(N)} \in \Omega^{(N)}$ such that
\be \label{EqLimStartSimpleZQ2}
\mathrm{liminf}_{N \rightarrow \infty} \P[X_{T}^{(N)} \in S_{\delta}] \geq \P[X_{T} \in S].
\ee 
We observe that, for $\delta < \delta_{0}(\epsilon)$ sufficiently small, either the interior or the closure of $S_{\delta}$ is an element of $\mathcal{H}_{\epsilon',m}$ for some $\epsilon' > 0$.  Applying \eqref{EqRegCond1} with $i=1$, this implies
\be \label{EqLimEndSimpleZQ2}
\lim_{N \rightarrow \infty} \pi^{(N)}(S_{\delta}) = \pi(S_{\delta}).
\ee 
Combining equalities \eqref{EqLimStartSimpleZQ2}, \eqref{EqLimEndSimpleZQ2} with the triangle inequality and the fact that $\P[X_{T} \in S] \geq \pi(S)$, 
\be \label{EqLimTvProofSimp1}
\mathrm{liminf}_{N \rightarrow \infty} | \P[X_{T}^{(N)} \in S_{\delta}] - \pi^{(N)}(S_{\delta}) | \geq | \P[X_{T} \in S] - \pi(S) | - | \pi(S_{\delta}) - \pi(S) |.
\ee 
Now, assume that the mixing time $\tau$ satisfies $\tau > T$. By the definition of the mixing time, there exists a measurable set $R' \subset \Omega$, $x \in \Omega$ and $\eta > 0$ such that the chain $\{ X_{t} \}_{t \in \mathbb{N}}$ started at $X_{0} = x$ satisfies
\be 
\P[X_{T} \in R'] - \pi(R')  > \frac{1}{4} + \eta.
\ee 
By Lemma \ref{LemmaRegularApprox}, this implies that there exists $\epsilon > 0$, $m \in \mathbb{N}$ and $R \in \mathcal{H}_{\epsilon,m}$ such that
\be \label{ineqlastnamedthing}
\P[X_{T} \in R] - \pi(R) > \frac{1}{4} + \eta.
\ee 
We note that inequality \eqref{EqLimTvProofSimp1} holds for a generic set $S \in \mathcal{H}_{\epsilon,m}$, and in particular it holds with $S$ replaced by the set $R$ in inequality \eqref{ineqlastnamedthing}. Applying inequality \eqref{EqLimTvProofSimp1} to inequality \eqref{ineqlastnamedthing} implies that for some sequence of points $\{x^{(N)} \in \Omega^{(N)} \}_{N \in \mathbb{N}}$ and Markov chains started at $X_{0}^{(N)} = x^{(N)}$,
\be 
\mathrm{liminf}_{N \rightarrow \infty} | \P[X_{T}^{(N)} \in R_{\delta}] - \pi^{(N)}(R_{\delta}) | > \frac{1}{4} + \eta - | \pi(R) - \pi(R_{\delta}) |.
\ee 
Since this holds for all $\delta > 0$, equation \eqref{EqRegCond2} implies that 

\be 
\mathrm{liminf}_{N \rightarrow \infty} \| \mathcal{L}(X_{T}^{(N)}) - \pi^{(N)} \|_{\mathrm{TV}} \geq \frac{1}{4} + \eta > \frac{1}{4} .
\ee  
Since this argument holds for all $T < \tau$, we have
\be 
\mathrm{liminf}_{N \rightarrow \infty} \tau^{(N)} \geq \tau.
\ee 
\par 

Next, we show that
\be 
\mathrm{limsup}_{N \rightarrow \infty} \Phi^{(N)} \leq \Phi.
\ee
 To do so, fix $\epsilon > 0$. By Lemma \ref{LemmaNiceApproxCondSet}, there exists $\delta > 0$, $m \in \mathbb{N}$ and $S \in \mathcal{H}_{\epsilon,m}$ such that $\Phi(S) < \Phi + \epsilon$.   Applying \eqref{EqRegCond1} with $i=4$ and inequality \eqref{IneqKernelApproxLemmaSimpIneq3},
\be \label{EqCondEventual}
\begin{split}
\Phi^{(N)}(S) &= \frac{\sum_{x \in S, y \notin S} \pi^{(N)}(x) K^{(N)}(x,y)}{\pi^{(N)}(S)} \\ 
&= \frac{\sum_{x \in S, y \notin S} (1-\alpha_{x}) \pi^{(N)}(x) \pi^{(N)}(y) \sum_{U \in \mathcal{U}_{x}, y \in U} K_{x,U} \frac{1}{\pi(U)} } {\pi(S)} + o_{N}(1),
\end{split}
\ee 
where the error term $o_{N}(1)$ in this expression depends only on $N$ and not on the specific set $S$. 
We denote by $x^{\ast} \, : \, \mathbb{R}^{d} \mapsto \Omega^{(N)}$ a measurable function  that satisfies the containment condition
\be 
x \in \prod_{i=1}^{d} [x^{\ast}(x)-\frac{1}{2N},x^{\ast}(x) + \frac{1}{2N}]
\ee 
for all $x \in \mathbb{R}^{d}$.\footnote{We point out that such a function $x^{\ast}$ always exists. Note that the containment condition actually determines the value of $x^{\ast}$ except on the set $A = \{x \in \mathbb{R}^{d} \, : \, \exists \, 1 \leq i \leq d \, \text{ s.t. } x_{i} \in \frac{1}{N} \mathbb{Z} \}$. Furthermore, $x^{\ast}$ is continuous on $A^{c}$ and $A$ has measure 0. It is clear that the domain of $x^{\ast}$ can be extended from $A^{c}$ to $\mathbb{R}^{d}$ in a measurable way, e.g. by choosing the smallest allowed value in the usual lexicographic order on $\mathbb{R}^{d}$.}

From the definition of $\pi^{(N)}$ in equation \eqref{EqDefSomeArbitraryDiscretization}, the sum in the numerator in the second line of equation \eqref{EqCondEventual} can be written in the form 
\be 
\sum_{x \in S, y \notin S} (1-\alpha_{x}) \pi^{(N)}(x) \pi^{(N)}(y) \sum_{U \in \mathcal{U}_{x}, x^{\ast}(y) \in U} K_{x,U} \frac{1}{\pi(U)} &= \int F^{(N)}(x) \rho(x)  dx,
\ee 
where  
\be 
F^{(N)}(x) = (1 - \alpha_{x^{\ast}(x)}) \sum_{y \in \Omega^{(N)}} \pi^{(N)}(y) \sum_{U \in \mathcal{U}_{x^{\ast}(x)}, y \in U} K_{x^{\ast}(x),U} \frac{1}{\pi(U)} \leq 1.
\ee 

Since $F^{(N)} \leq 1$, we can apply the dominated convergence theorem along with equation \eqref{EqCondEventual} and inequality \eqref{IneqAssKernelCont} to conclude that
\be 
\lim_{N \rightarrow \infty} \frac{\sum_{x \in S, y \notin S} (1-\alpha_{x}) \pi^{(N)}(x) \pi^{(N)}(y) \sum_{U \in \mathcal{U}_{x}, \, U \ni y} K_{x,U} \frac{1}{\pi(U)} } {\pi(S)} = \Phi(S).
\ee 
Thus, we find that 
\be 
\mathrm{limsup}_{N \rightarrow \infty} \Phi^{(N)} &\leq \mathrm{limsup}_{N \rightarrow \infty} \Phi^{(N)}(S) \\
&= \Phi(S) \\
&\leq \Phi + \epsilon.
\ee 
Since this holds for all $\epsilon > 0$, we conclude that
\be 
\mathrm{limsup}_{N \rightarrow \infty} \Phi^{(N)}  \leq \Phi.
\ee 
\par 
Next, we show that 
\be 
\mathrm{liminf}_{N \rightarrow \infty} \Phi^{(N)} \geq \Phi.
\ee 
Fix $\epsilon > 0$ and consider a set $S^{(N)} \subset \Omega^{(N)}$. Recalling the definition of $\mathcal{P}^{(N)}$ from equation \eqref{EqDefProjectionLikeOperator}, let $S = \mathcal{P}^{(N)}(S^{(N)})$. By equality \eqref{EqCondEventual}, 
\be \label{EqCondEventual2}
\Phi^{(N)}(S^{(N)}) = \frac{\sum_{x \in S^{(N)}, y \notin S^{(N)}} (1-\alpha_{x}) \pi^{(N)}(x) \pi^{(N)}(y) \sum_{U \in \mathcal{U}_{x}, \, U \ni y} K_{x,U} \frac{1}{\pi(U)} } {\pi(S)} + o_{N}(1),
\ee 
where the $o_{N}(1)$ term does not depend on the particular set $S^{(N)}$. By equality \eqref{EqRegCond1} with $i = 4$, there exists $M_{1} = M_{1}(\epsilon)$ such that $N > M_{1}(\epsilon)$ implies 
\be 
|\sum_{x \in S^{(N)}, y \notin S^{(N)}} (1-\alpha_{x}) \pi^{(N)}(x) \pi^{(N)}(y) \sum_{U \in \mathcal{U}_{x}} K_{x,U} \frac{1}{\pi(U)} - \int_{S} \rho(x) (1-\alpha_{x}) \sum_{U \in \mathcal{U}_{x}} K_{x,U} \frac{\pi(U \cap S^{c})}{\pi(U)} dx | < \epsilon \pi(S).
\ee 
Combining this inequality with equality \eqref{EqCondEventual2}, there exists some $M_{2} = M_{2}(\epsilon)$ such that $N > M_{2}(\epsilon)$ implies 
\be 
\Phi^{(N)}(S^{(N)}) \geq \Phi(S) - \epsilon.
\ee 
Thus, for $N > M_{2}(\epsilon)$,
\be 
\Phi^{(N)} \geq \Phi - \epsilon,
\ee 
and so we conclude that
\be
\mathrm{liminf}_{N \rightarrow \infty} \Phi^{(N)} \geq \Phi. 
\ee

Finally, we prove the existence of sequences $\beta^{(N)}, \gamma^{(N)}$ that satisfy inequality \eqref{EqEpsDelt1}. We note that, if $\pi^{(N)}(S) \leq \beta$, then $\pi(\mathcal{P}^{(N)}(S)) = \pi^{(N)}(S) \leq \beta$ as well. Furthermore, by  equality \eqref{EqRegCond1} with $i=4$ and inequality \eqref{IneqKernelApproxLemmaSimpIneq3},  
\be 
\frac{Q^{(N)}(S,y)}{\pi^{(N)}(y)} &= \frac{\sum_{x \in S}  \pi^{(N)}(x) K^{(N)}(x,y)}{\pi^{(N)}(y)} \\
&= \sum_{x \in S} (1- \alpha_{x}) \pi^{(N)}(x) \sum_{U \in \mathcal{U}_{x}, \, U \ni y} K_{x,U} \frac{1}{\pi(U)} + \alpha_{y} \textbf{1}_{y \in S} + o_{N}(1).
\ee 
Combining this with the fact that $\pi(\mathcal{P}^{(N)}(S)) \leq \beta$, inequality \eqref{IneqAssKernelCont} and the assumption that inequality \eqref{IneqAssEpsDeltFirstApp} is satisfied, this implies
\be 
\frac{Q^{(N)}(S,y)}{\pi^{(N)}(y)} \leq \gamma +  o_{N}(1).
\ee
This proves the existence of a sequence $\{ \beta^{(N)}, \gamma^{(N)} \}_{N \in \mathbb{N}}$ with the desired properties, completing the proof of the proposition.
\end{proof}

\section{Derivation of Theorem \ref{LiftImpBd2} from Theorem \ref{LiftImpBd3}} 
\label{sec-last}

\begin{proof}[Proof of Theorem \ref{LiftImpBd2}]
It is sufficient to show that Assumptions \ref{AsmpSimpleIntroThm} imply that 
part (2) of Assumptions \ref{AssumptionsRegularityConditions} hold. We begin by proving equality \eqref{EqRegCond1} when $i=1$. Write $M = \max_{1 \leq i \leq d} (B_{i} - A_{i}) + 3$. Fix $\epsilon >0$, $m \in \mathbb{N}$ and $S \in \mathcal{H}_{\epsilon,m}$. For any $U \in \mathcal{R}_{\epsilon}$, there are at most $2d (4MN)^{d-1}$ boxes of the form $\prod_{i=1}^{d} [x_{i} - \frac{1}{2N}, x_{i} + \frac{1}{2N}]$ with $(x_{1},\ldots,x_{d}) \in \Omega^{(N)}$ that are partially but not entirely contained in $U$. Thus, taking a union bound over the (at most) $m$ sets in $S$, we have by condition  \eqref{IneqDensityBlah} that
\be 
| \pi^{(N)}(S) - \pi(S) | \leq Cm (2 M N)^{d-1} (2d) N^{-d}.
\ee 
Taking limits as $N \rightarrow \infty$, this implies 
\be 
\lim_{N \rightarrow \infty} \sup_{S \in \mathcal{H}_{\epsilon,m}} | \pi^{(N)}(S) - \pi(S) | \leq \lim_{N \rightarrow \infty} Cm (2 M N)^{d-1} \, (2d)  N^{-d} = 0.
\ee 
This completes the proof of equality \eqref{EqRegCond1} in the case $i=1$. By Assumptions \ref{AsmpSimpleIntroThm}, $\mathcal{B} \subset \mathcal{H}_{\psi, q}$, so this completes the proof of equality  \eqref{EqRegCond1} in the case $i=2$ as well.  

To prove equality \eqref{EqRegCond1} in the case $i=3$, we note that the first two parts of equality \eqref{EqRegCond1} imply that for all $M \in \mathbb{N}$,
\be 
\lim_{N \rightarrow \infty} \sup_{x \in \Omega^{(M)} } \sup_{S \in \mathcal{B}} \Big| \frac{  \pi^{(N)}(\mathcal{P}^{(M)}(x)) }{\pi^{(N)}(S)} -  \frac{  \pi(\mathcal{P}^{(M)}(x)) }{\pi(S)} \Big| = 0.
\ee 
Combining this with inequality \eqref{IneqProkBound}, this implies 
\be 
\lim_{N \rightarrow \infty} \sup_{S \in \mathcal{B}} d_{\mathrm{Prok}}(\pi^{(N)}|_{S}, \pi|_{S}) &\leq \lim_{N \rightarrow \infty} \sup_{S \in \mathcal{B}}  \inf \{ \epsilon > 0 \, : \, X \sim \pi^{(N)}|_{S}, \, Y \sim \pi|_{S}, \, \P[ \| X - Y \| > \epsilon] \leq \epsilon \} \\
&\leq \lim_{N \rightarrow \infty} (\frac{2 \sqrt{d}}{M} + \sup_{x \in \Omega^{(M)} } \sup_{S \in \mathcal{B}} \Big| \frac{  \pi^{(N)}(\mathcal{P}^{(M)}(x)) }{\pi^{(N)}(S)} -  \frac{  \pi(\mathcal{P}^{(M)}(x)) }{\pi(S)} \Big| ) \\
&= O \left( \frac{1}{M} \right).
\ee 
Since this holds for all $M \in \mathbb{N}$, this completes the proof of equality \eqref{EqRegCond1} in the case $i=3$. \par 
Finally, we recognize that the second term in the definition of $\tilde{\theta}^{(4)}(S)$ following equality \eqref{EqRegCond1} is exactly the integral $Q(\mathcal{P}^{(N)}(S),\mathcal{P}^{(N)}(S)^{c})$ while the first is a Riemann sum approximation of this integral. Thus, equality \eqref{EqRegCond1} in the case $i=4$ follows from the second part of inequality \eqref{IneqDensityBlah}, inequality \eqref{IneqAssKernelCont}, and Taylor's theorem. 
\end{proof}

\section*{Acknowledgements}
The first author, KR, is partially supported by NSF grant DMS-1407504 and the 
second author, AMS, is partially supported by an NSERC grant. This project was started while AMS was visiting ICERM, and he thanks ICERM for its generous support.

\bibliographystyle{plain}
\bibliography{CNRBBib}

\newpage

\appendix 

\section{Comparison to Previous Results on Discrete Spaces} \label{AppSimpleImp}
We point out that Theorem \ref{EvSetBd1} can give better bounds than Theorem 3.1 of \cite{CLP99} in some cases. Let $\{ X_{t} \}_{t \in \mathbb{N}}$ be a Markov chain on a finite state space that satisfies Assumptions \ref{AssumptionsEvoSet}, with associated conductance $\Phi$, mixing time $\tau$, and constants $\beta, \gamma > 0$ satisfying equation \eqref{EqEpsDelt1}. Then let $\{ \widehat{X}_{t} \}_{t \in \mathbb{N}}$ be a lift of $\{ X_{t} \}_{t \in \mathbb{N}}$ with mixing time $\widehat{\tau}$ and conductance $\widehat{\Phi}$. By the same calculation as contained in inequality \eqref{IneqMainResult} (but omitting the lines involving superscript $N$'s), we have:

\be \label{IneqDiscCor}
\widehat{\tau} \geq \frac{\sqrt{\gamma}}{32 \sqrt{  \log \left( \frac{4}{\pi_{\ast}} \right) \log \left(\frac{\sqrt{\beta}}{2} \right) }} \sqrt{\tau}
\ee

We give an example for which this bound is better than Theorem 3.1 of \cite{CLP99}. We fix $k, n \in \mathbb{N}$ and consider a random walk $\{ X_{t} \}_{t \in \mathbb{N}}$ on the cycle $\mathbb{Z}_{kn} = \{1,2,\ldots, nk \}$. Recall that the usual \textit{graph distance} on $\mathbb{Z}_{kn}$ is given by
\be
d(x,y) = \min(|x-y|, nk - |x-y|)
\ee
We then define the transition kernel $K$ by:

\be \label{EqKernelLiteralHeloAppendixA}
\begin{split}
K(i,i) &= \frac{1}{2} \\
K(i,j) &= \frac{1}{2k}, \qquad 0 < d(i,j) \leq k \\
K(i,j) &= 0, \qquad d(i,j) > k. \\
\end{split}
\ee
This walk has stationary distribution $\pi(x) = \frac{1}{kn}$ for all $x \in \mathbb{Z}_{kn}$. It can be represented according to the form of equation \eqref{EqKernelLiteralHeloAppendixA} using the weights 
\be \label{EqKernelRepAppendixA}
\begin{split}
K_{x,\{x\}} &= \frac{1}{2}, \\
K_{x, \{x-k,\ldots,x-1,x+1,\ldots,x+k\}} &= \frac{1}{2}, \\
K_{x,U} &= 0 \qquad \text{otherwise.}
\end{split}
\ee 
Using arguments identical to those in Theorem 2 of Chapter 3 of \cite{Diac88}, the mixing time $\tau$ of this walk can be shown to be $\tau = \Theta(n^{2})$. Theorem 3.1 of \cite{CLP99} (combined with remark \ref{RemarkEquivalenceMixingTimes}) implies that any lift of this chain must have mixing time at least $\widehat{\tau} = \Omega \left( \frac{n}{\sqrt{\log(kn)}} \right)$. 

In the other direction, set $\beta = \frac{1}{12n}$, $\gamma = \frac{3}{4}$. For any set $S \subset \mathbb{Z}_{kn}$ with $\pi(S) \leq \frac{1}{12n}$,
\be 
\frac{Q(S,y)}{\pi(y)} \leq \frac{1}{2} + \frac{1}{2k} | \{ x \in S \, : \, 1 \leq d(x,y) \leq k\} | \leq \frac{3}{4}. 
\ee 
Thus, part \textbf{(1)} of Assumptions \ref{AssumptionsEvoSet} is satisfied with $\beta = \frac{1}{12n}$ and $\gamma = \frac{3}{4}$. Using the representation of $K$ given by equation \eqref{EqKernelRepAppendixA}, we have $\pi_{\ast} = \frac{2}{n}$. By inequality \eqref{IneqDiscCor}, then, we find $\widehat{\tau} = \Omega \left( \frac{n}{\log(n)} \right)$.

Let $\{ k(\lambda) \}_{ \lambda \in \mathbb{N}}$ be a sequence with the property $k(\lambda) \gg n(\lambda)^{\log(n(\lambda))}$. Then let $\{X_{t}^{(\lambda)}\}_{t \geq 0}$ be the Markov chain with kernel given by equation \eqref{EqKernelLiteralHeloAppendixA} with $k = k(\lambda)$. In this regime, the bound from inequality \eqref{IneqDiscCor} is tighter than that of Theorem 3.1 of \cite{CLP99}. 
More generally, we expect inequality \eqref{IneqDiscCor} to be tighter than  Theorem 3.1 of \cite{CLP99} when the discrete state space $\Omega$ of the Markov chain $\{X_{t}\}_{t \geq 0}$ is extremely large compared to the support of $K(x,\cdot)$ for all $x$.

\section{ Mixing Bound for the Continuous Cycle Walk} \label{AppSimpleCoup}

In this section, we show that the kernel $K$ from example \ref{ExSimpleCircle} has mixing time $\tau = \Theta(c^{-2})$.  Let $\mu$ be the uniform measure on $[-c,c]$ as a subset of the torus $[0,1]$. It is well known that, as a compact abelian Lie group, the characters of the torus $[0,1]$ are given by $\chi_{n}(x) = e^{2 \pi inx}$. We calculate 
\be
\int_{[0,1]} \chi_{n}(x) d\mu(x) &= \frac{\sin(2 \pi n c)}{2 \pi n c}.
\ee
By Lemma 4.3 of \cite{Rose94}, this means that for $A >1$, $T > A c^{-2}$ and $S \subseteq [0,1]$ measurable, we have for all $c < C_{0}$ sufficiently small that
\be
 \vert K^{T}(x,S) - \pi(S) \vert &\leq \frac{1}{4} \sum_{n >1} \left(\frac{\sin(2 \pi n c)}{2 \pi n c}\right)^{T} \\
&\leq \frac{1}{4} \sum_{n=2}^{\lfloor \frac{1}{20 \pi \, c} \rfloor} \left(\frac{\sin(2 \pi n c)}{2 \pi n c}\right)^{T} + \frac{1}{4} \sum_{n=\lfloor \frac{1}{20 \pi \, c} \rfloor}^{\lceil \frac{1}{4 c} \rceil} \left(\frac{\sin(2 \pi n c)}{2 \pi n c}\right)^{T} + \frac{1}{4}\sum_{n > \lceil \frac{1}{4c} \rceil} \left(\frac{\sin(2 \pi n c)}{2 \pi n c}\right)^{T} \\
&\leq \frac{1}{2} \sum_{n=2}^{\lfloor \frac{1}{20 \pi \, c} \rfloor} \left(\frac{\sin(2 \pi n c)}{2 \pi n c}\right)^{T} + \frac{1}{2} \sum_{n=\lfloor \frac{1}{20 \pi \, c} \rfloor}^{\lceil \frac{1}{4 c} \rceil} \left(\frac{\sin(2 \pi n c)}{2 \pi n c}\right)^{T} + \frac{1}{4}\sum_{n > \lceil \frac{1}{2c} \rceil} \left(\frac{\sin(2 \pi n c)}{2 \pi n c}\right)^{T} \\
&\leq \frac{1}{2} \sum_{n=2}^{\lfloor \frac{1}{20 \pi \, c} \rfloor}  \left( 1 - \frac{9}{10} \frac{(2 \pi n c)^{2}}{6} \right) + \frac{1}{2} \sum_{n=\lfloor \frac{1}{20 \pi \, c} \rfloor}^{\lceil \frac{1}{4 c} \rceil} \left( 0.999 \right)^{T} + \frac{1}{4}\sum_{n > \lceil \frac{1}{2c} \rceil} \left(\frac{1}{2 \pi n c}\right)^{T}  \\
&\leq \frac{1}{2} \sum_{n=2}^{\infty} e^{- \frac{36 \pi^{2}}{60} A \, n^{2}} + c^{-1}  \left( 0.999 \right)^{A c^{-2}} +  \frac{1}{4} \int_{\frac{1}{2c} -2}^{\infty} \left(\frac{1}{2 \pi n c}\right)^{T} dx  \\
&\leq e^{- \frac{36 \pi^{2}}{60} A } +  c^{-1}  \left( 0.999 \right)^{A c^{-2}} + \frac{1}{4} \frac{1}{\frac{1}{2c} - 2} \frac{1}{A c^{-2} - 1} \pi^{-A c^{-2}}.
\ee 

Since each of the three terms in the final line can be made arbitrarily small (uniformly in $c < C_{0}$) by choosing $A$ large, this implies $\tau = O(c^{-2})$. \\

To show the reverse inequality, let $T(c) = A_{c} c^{-2}$, for some sequence $A_{c} \rightarrow 0$ as $c \rightarrow \infty$. For a copy of the chain started at $X_{0}=0$, we have by Bernstein's inequality $P[ \vert X_{T(c)} \vert > \frac{1}{10}] \leq 2 e^{- \frac{3}{2 A_{c}} \frac{1}{30 + c} } \rightarrow 0$. Thus, $\limsup_{c \rightarrow \infty} \vert \vert \mathcal{L}(X_{T(c)}) - U \vert \vert_{TV} \geq \frac{1}{2}$, and so $\tau = \Omega(c^{-2})$.

\end{document}